\documentclass[12pt]{article}
\usepackage{amssymb,amsthm,hyperref,amsmath,bm,amsfonts,tikz}
\usepackage{arydshln}
\usepackage{verbatim}
\usepackage{graphicx}
\usepackage{float}
\usepackage{setspace}
\usepackage{subfigure}
\usepackage{multirow}
\usepackage{subfigure}
\usepackage{color}
\usepackage{appendix}
\DeclareGraphicsExtensions{.pdf,.png,.jpg}
\usetikzlibrary{calc}
\newtheorem{theorem}{Theorem}[section]
\newtheorem{lemma}[theorem]{Lemma}

\newtheorem{definition}[theorem]{Definition}
\newtheorem{assumption}[theorem]{Assumption}

\newtheorem{remark}[theorem]{Remark}

\numberwithin{equation}{section}

\setlength{\textheight}{21.6cm} \setlength{\textwidth}{16cm}
\hoffset=-1.35cm \voffset=-2.0cm

\newcommand{\diag}{\mbox{diag}}
\newcommand{\J}{\boldsymbol{j}}
\newcommand{\E}{\boldsymbol{e}}

\doublespacing

\title{Numerical boundary control of multi-dimensional discrete-velocity kinetic models}
\author{
Haitian Yang\thanks{E-mail: yht21@mails.tsinghua.edu.cn}\\
\small{\textit{Department of Mathematical Sciences, Tsinghua University, Beijing 100084, China.}}\\ \\
	Wen-An Yong\thanks{E-mail: wayong@tsinghua.edu.cn}\\
\small{\textit{Department of Mathematical Sciences, Tsinghua University, Beijing 100084, China.}}\\
\small{\textit{Beijing Institute of Mathematical Sciences and Applications, Beijing 101408, China.}}
}
\date{\today}

\begin{document}
	\maketitle{} 
\begin{abstract}
	This paper extends our recent results on multi-dimensional discrete-velocity models to the numerical level. By adopting an operator splitting scheme and introducing a suitable discrete Lyapunov function, we derive numerical control laws that ensure the corresponding numerical solutions decay exponentially in time. To handle the stiff source term, we also use an implicit scheme for the collision part and prove the stability of the resulting schemes. The theoretical results are validated through three  numerical simulations for the two-dimensional coplanar model.
\end{abstract}

   \hspace{-0.5cm}\textbf{Keywords: Discrete-velocity models, Lyapunov function, Numercial boundary control, Semi-implicit schemes.}
 \small{}\\
\section{Introduction}

This paper continues our recent work \cite{yang2025}. In \cite{yang2025}, we studied the boundary control problem for multi-dimensional (multi-D) discrete-velocity models, which are of first-order hyperbolic systems. In this paper, our goal is to develop suitable numerical schemes for these models and establish a numerical stabilization result. We begin by reviewing  existing results on continuous and numerical boundary control.

Over the past two decades, the boundary control problem of first-order hyperbolic systems has attracted much attention in the mathematical and engineering community due to its wide range of applications. Three main methods have been developed for addressing this problem: the characteristics method, the Lyapunov function method and the backstepping method.  In \cite{li2010controllability}, exact boundary controllability for quasi-linear systems was shown via the characteristics method. The Lyapunov function method with a smallness assumption on the source terms was presented in \cite{bastin2016stability}. 
In \cite{Hu2016,Hu2019}, general linear or quasi-linear coupled systems were treated with the backstepping method. For an overview of the Lyapunov function method and the backstepping method, we refer to the survey papers \cite{hayat2021boundary} and \cite{vazquez2024backstepping}, respectively. 
While most of the aforementioned works deal with the spatially one-dimensional problems, a few  efforts have been made to extend the Lyapunov analysis to multi-D systems recently. In \cite{herty2022stabilization, herty2024boundary}, the authors employed an exponential-type Lyapunov function under certain assumptions on the source terms. In our recent work \cite{yang2023feedback, yang2025}, we constructed  the Lyapunov functions based on a physically relevant dissipation structure \cite{yong1999singular} and the specific features of the coefficient matrices, which enabled us to achieve boundary stabilization for the 2-D Saint-Venant equations and multi-D discrete-velocity kinetic models.

In view of the application background of the control problems, it is meaningful to design appropriate numerical schemes for solving the equations and proving the corresponding  exponential stability. In the one-dimensional case, the pioneering work \cite{banda2013numerical} deals with the numercial stabilization of conservation laws. In \cite{gottlich2017numerical} and \cite{banda2020numerical}, the authors used the operator splitting scheme  to study numerical stabilization for hyperbolic balance laws around uniform and non-uniform steady states, respectively. A key assumption in the last   two works is that the source term is fully dissipative, see \cite[(3.14)]{gottlich2017numerical} and \cite[(16)]{banda2020numerical}. The methods can be applied to the semi-linear case \cite{Stephan2023}.  
In higher dimensions, the only relevant result we know is the very recent paper \cite{herty2024numerical}, where the authors consider the diagonal coefficient matrices without a source term, which allows them to decouple the system into several independent scalar problems.

In this paper, we aim to extend our recent results \cite{yang2025} on multi-D discrete-velocity models to a numerical level. Motivated by the works mentioned above, we  adopt an operator splitting scheme. The main difficulty lies in the fact that the source term is not fully dissipative but possesses a stability structure \cite{yong1999singular,yong2008interesting}, which requires a more delicate analysis of certain weighted coefficients in the Lyapunov functional. To the best of our knowledge, this is the first work that addresses numerical boundary control for multi-D hyperbolic systems with coupled source terms. 
It is noteworthy that our analysis can be applied to many physcially relevant systems which satisfy the structural stability condition \cite{yong1999singular,yong2008interesting}, such as the one discussed in \cite{HERTY201612}. Moreover, since the discrete-velocity models are used to simulate the Boltzmann equation, the source term is usually stiff. To handle this, we also use an implicit scheme for the collision part and prove that the corresponding schemes are stable. As far as we know, this is also the first work on semi-implicit schemes in the context of numerical control theory for the first-order hyperbolic equations.

This paper is organized as follows. In Section \ref{S2}, we review the properties of multi-D discrete-velocity models \cite{yang2025}. In Section \ref{S3}, we specify the numerical boundary conditions and introduce an operator splitting scheme that combines an upwind scheme for the advection term with a forward Euler scheme for the collision term. We then prove that the numerical schemes, together with suitable numerical boundary conditions, are exponentially stable. We also show that an implicit scheme for the collision part is stable, which can be used to handle the stiff source term in practical simulations for Boltzmann equations. In Section \ref{S4}, we consider the 2-D coplanar model \cite{platkowski1988discrete} with numerical experiments to illustrate our theoretical results. The concluding remarks are presented in the last section.

\section{Boundary control of multi-D discrete-velocity models} \label{S2}
In this section, we briefly recall the stabilization results in \cite{yang2025}. Discrete-velocity models describe the evolution of gas density by tracking particle populations moving with a finite number of prescribed fixed velocities (called discrete velocities). Depending on the number and directions of these velocities, different models have been proposed, such as the coplanar model, the Broadwell model, the Carleman model, and so on. This is why we refer to them in the plural as “models.” They are systems of multi-D first-order semi-linear hyperbolic equations with source terms (balance laws). They are computationally feasible approximations of the Boltzmann equation \cite{palczewski1997consistency} and can be used to simulate the behaviors of
rarefied gases \cite{inamuro1990numerical}, to investigate the shock structure \cite{gatignol1975kinetic}, to study the Couette and Rayleigh flow \cite{Gatignol1975}, etc. 
For a detailed overview of the history and applications of discrete-velocity models, we refer to the survey paper \cite{platkowski1988discrete}.

As shown in \cite[Section~II]{yang2025}, we linearized the semi-linear systems around a uniform steady state (a constant vector). The linearized system
\begin{equation} \label{2.1}
	f_t(t,x)+\sum_{i=1}^d \Lambda_i f_{x_i}(t,x)=Qf(t,x)
\end{equation}
is defined on $(t,x) \in [0,\infty)\times \Omega$, where $\Omega$ a bounded domain in $\mathbb R^d$ with Lipschitz continuous boundary. In practice, $\Omega$ typically represents a gas reaction container. $f = f(t,x) \in \mathbb{R}^K$ denotes the unknown state,
$\Lambda_i = \diag\{\lambda_{1i}, \dots, \lambda_{Ki}\} \in \mathbb{R}^{K\times K}$ $(i = 1, \dots, d)$ are constant diagonal matrices,
and $Q = (Q_{km})_{1 \le k,m \le K}$ is a constant $K \times K$ matrix. The physical meaning of the $k$-th ($k=1,\cdots,K$) component of $f$ is the fluctuation of the density of gas particles moving with the velocity $v_k=(\lambda_{k,1},\cdots,\lambda_{k,d})$, and the matrix $Q$ describes the effect of binary collisions between particles. Notice that the Boltzmann equation has infinitely
many velocities while the discrete-velocity models replace the  continuous
velocity variable with a finite number of velocities, therefore, this is the origin of their names: the velocity variable of the Boltzmann equation is discretized.

To solve the system \ref{2.1}, we need to prescribe the boundary conditions on $\partial \Omega$. At boundary point $x \in \partial \Omega$, we denote by $\mathbf{n}(x)=(n_1(x),\cdots,n_d(x))$  the unit outward normal vector. For each $x \in \partial \Omega$, the components of $f$ corresponding to the negative (resp. positive) entries of the diagonal matrix
$
\sum_{j=1}^d n_j(x)\Lambda_j
$
are referred to as incoming (resp. outgoing) variables at the boundary point, denoted by $f_-(t,x)$ (resp. $f_+(t,x)$). Here, the minus (resp. plus) sign indicates that the information propagates into (resp. out of) the domain against (resp. along) the outward normal direction.
According to the classic theory \cite{Serre2006,higdon1986initial,Majda1975InitialboundaryVP,Russell1978}, the proper boundary condition specifies the incoming variable
at each boundary point $x \in \partial \Omega$ :
\begin{equation} \label{2.2}
	f_-(t,x) \text{ in terms of } f_+(t,y), \quad y \in \partial \Omega,
\end{equation}
\noindent with  $y=x$ or not.
Unlike the one-dimensional case \cite[p. 243, (A.5)]{bastin2016stability}, it appears impossible to express general non-local boundary conditions in a compact form in the multi-D setting. This is because  boundary conditions may involve non-local couplings, leading to infinitely many possible choices. Therefore, we only use (\ref{2.2}) to express boundary conditions and illustrate it with examples (see  \cite{yang2025} and Section  \ref{S4} in this paper). A trivial example of (\ref{2.2}) is 
\begin{equation} \label{2.3}
	f_-(t,x)=0,\quad \forall x \in \partial \Omega.
\end{equation}

The discrete-velocity models satisfy the structural stability condition \cite{yong1999singular}, which corresponds to the celebrated Onsager reciprocal relations in non-equilibrium thermodynamics. Moreover, as discrete-velocity models approximate the Boltzmann equation, the structural stability condition can be viewed as an approximate form of the famous H-theorem \cite{platkowski1988discrete} for the Boltzmann equation. It also reflects the fact that the physical conservation law  holds universally, regardless of whether the underlying thermodynamic system is in equilibrium or not; see \cite{yong2008interesting}. With this structure, we pointed out in \cite[Lemma 1]{yang2025} that
\begin{lemma} \label{L1}
	There exists an invertible matrix $P$  and a diagonal positive definite matrix $\Lambda_0=\diag\{\lambda_{10},\cdots,\lambda_{K0}\}$ such that
	\begin{equation} \label{2.4}
		PQP^{-1}=-\begin{pmatrix}
			0 & 0 \\
			0 & \Lambda
		\end{pmatrix}
	\end{equation}
	and
	\begin{equation} \label{2.5}
		\qquad \Lambda_0 Q=-P^T\begin{pmatrix}
			0 & 0 \\
			0 & \Lambda
		\end{pmatrix} P,
	\end{equation}
	where $\Lambda \in \mathbb{R}^{r\times r}$ is a diagonal matrix with positive entries and $ r \leq K.$ 
\end{lemma}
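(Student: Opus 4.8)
The plan is to trace both identities (\ref{2.4}) and (\ref{2.5}) back to a single structural fact about the collision matrix: that $Q$ is symmetrizable by a positive \emph{diagonal} matrix and is dissipative in the associated weighted inner product. Indeed, the discrete analogue of the Boltzmann $H$-theorem — equivalently, the structural stability condition \cite{yong1999singular,yong2008interesting} enjoyed by discrete-velocity models — provides a positive definite diagonal matrix $\Lambda_0=\diag\{\lambda_{10},\dots,\lambda_{K0}\}$, built from the equilibrium weights (the Hessian of the discrete entropy at the uniform steady state), such that $\Lambda_0 Q$ is symmetric and negative semi-definite. I take this as the physical input; everything after it is linear algebra.

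Given such a $\Lambda_0$, the key reduction is to pass to the matrix $M:=\Lambda_0^{1/2}Q\Lambda_0^{-1/2}$. The symmetry $(\Lambda_0 Q)^{T}=\Lambda_0 Q$ is equivalent to $Q^{T}=\Lambda_0 Q\Lambda_0^{-1}$, and substituting this into $M^{T}=\Lambda_0^{-1/2}Q^{T}\Lambda_0^{1/2}$ yields $M^{T}=M$; moreover $w^{T}Mw=(\Lambda_0^{-1/2}w)^{T}(\Lambda_0 Q)(\Lambda_0^{-1/2}w)\le 0$ for every $w$, so $M$ is real symmetric and negative semi-definite. Hence some orthogonal $U$ diagonalizes $M$, and after ordering the eigenvalues so that the zero ones come first and setting $r:=\mathrm{rank}(Q)=\mathrm{rank}(M)\le K$ we obtain
\[
U^{T}MU=-\begin{pmatrix}0&0\\0&\Lambda\end{pmatrix},\qquad \Lambda:=\diag\{\mu_1,\dots,\mu_r\},\quad \mu_j>0,
\]
with $-\mu_1,\dots,-\mu_r$ the negative eigenvalues of $M$.

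The construction is then completed by taking $P:=U^{T}\Lambda_0^{1/2}$, which is invertible with $P^{-1}=\Lambda_0^{-1/2}U$. One verifies the two identities directly: $PQP^{-1}=U^{T}\bigl(\Lambda_0^{1/2}Q\Lambda_0^{-1/2}\bigr)U=U^{T}MU=-\left(\begin{smallmatrix}0&0\\0&\Lambda\end{smallmatrix}\right)$, which is (\ref{2.4}); and, using that $U^{T}MU=-\left(\begin{smallmatrix}0&0\\0&\Lambda\end{smallmatrix}\right)$ gives $U\left(\begin{smallmatrix}0&0\\0&\Lambda\end{smallmatrix}\right)U^{T}=-M$, we get $P^{T}\left(\begin{smallmatrix}0&0\\0&\Lambda\end{smallmatrix}\right)P=\Lambda_0^{1/2}U\left(\begin{smallmatrix}0&0\\0&\Lambda\end{smallmatrix}\right)U^{T}\Lambda_0^{1/2}=-\Lambda_0^{1/2}M\Lambda_0^{1/2}=-\Lambda_0 Q$, which is (\ref{2.5}). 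Thus $\Lambda_0$, $P$ and $\Lambda$ are the desired objects.

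The only genuine content — and the step I expect to be the main obstacle — is the first one: exhibiting the positive \emph{diagonal} symmetrizer $\Lambda_0$ for which $\Lambda_0 Q$ is symmetric and negative semi-definite. For an arbitrary $K\times K$ matrix this fails; it holds here precisely because binary collisions obey microscopic reversibility / detailed balance (the discrete $H$-theorem), which is also what forces $Q$ to be singular — the collision invariants span its null space — so in fact $r<K$. That $\Lambda_0$ may be chosen diagonal, rather than merely symmetric positive definite, matters downstream: it is then automatically compatible with the diagonal advection matrices $\Lambda_i$ and underlies the Lyapunov functional used later in the paper.
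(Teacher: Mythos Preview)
The paper does not actually prove Lemma~\ref{L1}; it is quoted verbatim from \cite[Lemma~1]{yang2025} as a structural input, so there is no in-paper argument to compare against. Your derivation is correct: granted the structural stability condition in the form ``$\Lambda_0 Q$ symmetric negative semi-definite with $\Lambda_0$ diagonal positive definite'' (which is exactly what the paper invokes from \cite{yong1999singular,yong2008interesting} and \cite{yang2025}), the passage to $M=\Lambda_0^{1/2}Q\Lambda_0^{-1/2}$, its orthogonal diagonalization, and the choice $P=U^{T}\Lambda_0^{1/2}$ deliver both (\ref{2.4}) and (\ref{2.5}) as you verify. Your identification of the existence of the diagonal symmetrizer as the only substantive step, and your remark that collision invariants force $r<K$, are both on point.
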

\noindent Note that the fully dissipative systems studied in \cite{gottlich2017numerical,banda2020numerical}
correspond to $r=K$, while this work allows $r<K.$

For these discrete velocities $v_1,\cdots,v_N$,
we impose the following technical assumption:
\begin{assumption} \label{A1}
	Each velocity $v_k$  $(k=1,\cdots,K)$ is a non-zero vector.
\end{assumption}
\noindent Physically, it means that the gas system under consideration does not contain static particles. Mathematically, this assumption ensures that the diagonal matrix $\sum_{i=1}^d \Lambda_i^2$ is strictly positive definite.

In \cite{yang2025} based on Lemma \ref{L1} and Assumption \ref{A1}, we constructed the following Lyapunov function 
$$
L(f)(t)= \int_{\Omega}  f^T(t,x) \left[\alpha\Lambda_0+\exp\left(-\sum_{i=1}^d \Lambda_ix_i\right)\right] f(t,x) dx,
$$
where $\alpha$ is a positive parameter suitably chosen. With this Lyapunov function, we prove that the system (\ref{2.1}) is exponentially stable provided that the boundary conditions (\ref{2.2}) satisfy
\begin{equation} \label{2.6}
	\int_{\partial \Omega}  f^T(t,x) \left[\alpha\Lambda_0+\exp\left(-\sum_{i=1}^d \Lambda_ix_i\right)\right]\left( \sum_{i=1}^d n_i\Lambda_i\right) f(t,x) d\sigma \geq 0.
\end{equation}
Here the notation $d\sigma$ stands for the Lebesgue measure on $\partial \Omega.$

Since discrete-velocity models are originally introduced as computationally feasible approximations of the Boltzmann equation, it is 
practically meaningful to design a suitable numerical scheme for the discrete-velocity models whose correpsonding numerical solution  is also exponentially stable, which will be presented in the next section.

\section{Main results} \label{S3}

\subsection{Numerical schemes}
For simplicity, we consider the cubic domain $\Omega = (0,1)^d$, and  discretize each spatial direction uniformly. Given a spatial length $\Delta x=1/N$ with $N$ a positive integer,  the grid points of the interval $[0,1]$ of the $k$-th direction are chosen as
$$
x_{k,j} = j\Delta x, \quad j = 0, \dots, N.
$$ 
We introduce the multi-index set 
$$
\mathcal{J} = \Bigl\{ \J = (j_1,\dots,j_d) \;\Big|\; 
j_i \in \{1,\dots,N-1\}, \; i=1,\dots,d \Bigr\}.
$$
and call the points $x_{\J}:=(x_{1,j_1},\cdots,x_{d,j_d})$ for $\J \in \mathcal{J}$ the interior points of the domain $\Omega$. These points lie inside the domain, and we aim to compute the numerical solution there. We also introduce boundary points $x_{\J}$ for $\J$ in the multi-index set
$$
\partial \mathcal{J} = \Bigl\{ \J = (j_1,\dots,j_d) \;\Big|\; 
\exists\, i \ \text{such that } j_i = 0 \ \text{or} \ j_i = N \Bigr\}.
$$
These points lie on $\partial \Omega$ and are used to impose our boundary conditions. 
See Figure \ref{fig:2d_spatial_discretization} for an illustration of the case $N=4, d=2.$
\begin{figure}[htbp]
	\centering
	\begin{tikzpicture}[scale=4, >=stealth, every node/.style={font=\scriptsize}]
		
		\begin{scope}
			\draw[->] (-0.1,0) -- (1.1,0) node[right] {$x_1$};
			\draw[->] (0,-0.1) -- (0,1.1) node[above] {$x_2$};
		\end{scope}

		\node[red,left] at (-0.25,0.5) {};
		\node[red,below] at (0.5,-0.25) {};
		
		\draw[thick] (0,0) rectangle (1,1);

		\begin{scope}[shift={(1.1,1.01)}]
			\draw[black, thick] (0,0) rectangle (0.8,0.3);
			
			\filldraw[red] (0.10,0.22) circle (0.6pt);
			\node[right] at (0.15,0.22) {Boundary points};
			
			\filldraw[blue] (0.10,0.08) circle (0.6pt);
			\node[right] at (0.15,0.08) {Interior points};
		\end{scope}
		
		\foreach \x in {0,0.25,0.5,0.75,1} {
			\filldraw[red] (\x,0) circle (0.6pt);   
			\filldraw[red] (\x,1) circle (0.6pt);   
		}
		\foreach \y in {0,0.25,0.5,0.75,1} {
			\filldraw[red] (0,\y) circle (0.6pt);   
			\filldraw[red] (1,\y) circle (0.6pt);   
		}
		
		\foreach \x in {0.25,0.5,0.75} {
			\foreach \y in {0.25,0.5,0.75} {
				\filldraw[blue] (\x,\y) circle (0.6pt);
			}
		}
		
		\foreach \x in {0.25,0.5,0.75} {
			\draw[dashed,gray] (\x,0) -- (\x,1); 
		}
		\foreach \y in {0.25,0.5,0.75} {
			\draw[dashed,gray] (0,\y) -- (1,\y); 
		}
		
	\end{tikzpicture}
	\caption{Spatial discretization of the square domain $\Omega = (0,1)^2$ with $N=4$.}
	\label{fig:2d_spatial_discretization}
\end{figure}
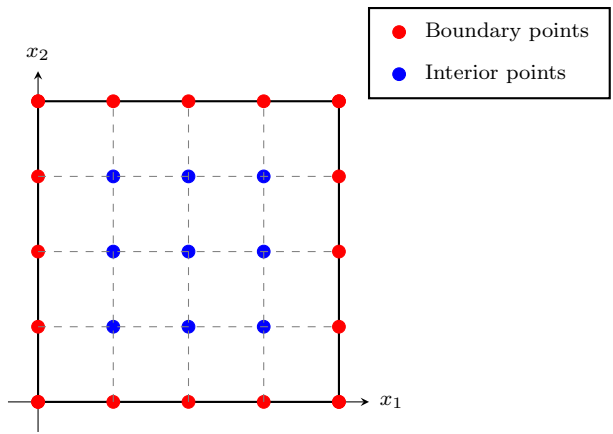

Next, we denote the time step by $\Delta t$ and the discrete time points as
$$
t_n = n \Delta t, \quad n \in \mathbb{N}.
$$
We introduce the notation $
f_{\boldsymbol{j}}^n $ with the multi-index $\J=(j_1,\cdots,j_d)$, to denote the numerical approximation of the solution $f(n\Delta t,x_{1,j_1}, \dots, x_{d,j_d})$ of the system (\ref{2.1}).
Note that $f_{\J}^n$ is a $K$-dimensional vector, we denote its $k$-th component by $f_{k,\J}^n$. 
Due to the large number of introduced symbols, we summarize their meanings in Figure \ref{fig:f_jkn_notation}. \\

\begin{figure}[htbp]
	\centering
	\begin{tikzpicture}[>=stealth, every node/.style={font=\scriptsize}]
		
		\node (f) at (0,0) {\Large $f_{k,\textcolor{blue}{\J}}^{\textcolor{red}{n}}$};
		
		\draw[dashed, thick, gray, rounded corners]
		($(f.north west)+(-0.1,0.1)$) rectangle ($(f.south east)+(0.1,-0.1)$);
		
		\draw[->, thick,dashed] (-0.32,0.2) -- (-1.2,1) node[above left,align=center,font=\scriptsize] {Numerical approximation of \\ $k$-th component of $f(t_n, x_{1,j_1}, \dots, x_{d,j_d})$};
		\draw[->, thick, red] (0.2,0.4) -- (1.2,1) node[above right, align=center,font=\scriptsize, red] {The time step $n$};
		\draw[->, thick] (-0.15,-0.3) -- (-1.2,-1) node[below left, align=center,font=\scriptsize] {The $k$-th component};
		\draw[->, thick, blue] (0.35,-0.3) -- (1.2,-1) node[below right,align=center, font=\scriptsize, blue] {The multi-index $\J = (j_1, \dots, j_d)$ \\ denotes the spatial grid point \\ $(x_{1,j_1},\cdots,x_{d,j_d})$};
		
	\end{tikzpicture}
	\caption{Illustration of the notation $f_{\J,k}^n$.}
	\label{fig:f_jkn_notation}
\end{figure}
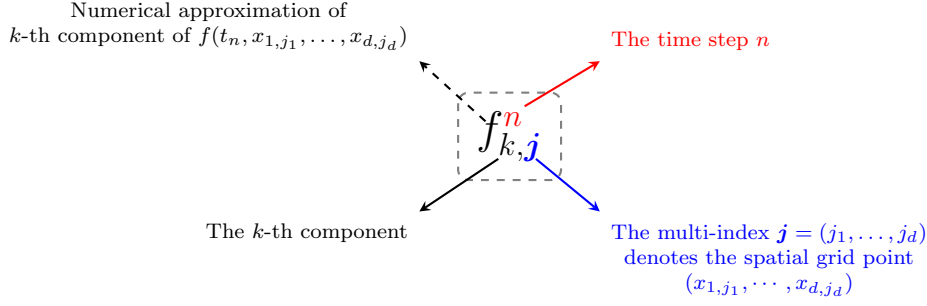

We use the following operator splitting schemes to numerically solve (\ref{2.1}). The idea is to divide the balance law into two parts which 
can be solved independently. For the advection
part $f_t + \sum_{i=1}^d \Lambda_i f_{x_i} = 0$, an upwind scheme is applied for discretizing the first-order operator. For the collision part
$f_t = Qf$, the forward Euler scheme is used. The splitting schemes are standard in the Lattice Boltzmann Method \cite{junk2009weighted}, and also have been used in the numerical control problem in the one-dimensional case \cite{gottlich2017numerical}.

Recall that the $i$-th ($i=1,\cdots,d$) coefficient matrix $\Lambda_i$ is $\diag\{\lambda_{1i},\cdots,\lambda_{Ki}\}$ and the source term matrix $Q$ is $(Q_{km})_{1\leq k,m\leq K},$ therefore, for $n \in \mathbb N$ and $\J \in \mathcal{J}$, the numerical schemes are formulated as
\begin{align}
	&\tilde{f}_{k,\J}^{n}=f_{k,\J}^{n}-\Delta t\sum_{i:\lambda_{ki}>0}\lambda_{ki}\frac{f_{k,\J}^{n}-f_{k,\J-\E_i}^{n}}{\Delta x} -\Delta t\sum_{i:\lambda_{ki}<0}\lambda_{ki}\frac{f_{k,\J+\E_i}^{n}-f_{k,\J}^{n}}{\Delta x}  , \label{3.1}\\
	&f_{k,\J}^{n+1}=\tilde{f}_{k,\J}^{n}+\Delta t\sum_{m = 1}^K Q_{km}\tilde{f}_{m,\J}^{n},\quad k=1,\cdots,K;\quad n\in \mathbb N;\quad \J \in \mathcal{J}.\label{3.2}
\end{align}	
Here and below, the notation $\E_i$ represents the $d$-tuple whose $i$-th component is $1$ and  others are all $0.$ For a given $k \in \{1,\cdots,K\}$, the notation $\sum\limits_{i:\lambda_{ki}>0}$ (resp. $\sum\limits_{i:\lambda_{ki}<0}$ ) denotes the summation over all directions $i=1,\ldots,d$ such that $\lambda_{ki}>0$ (resp. $\lambda_{ki}<0$).

To initialize the numerical schemes, we need a numerical initial value. Given an initial condition $f_0(x) \in L^2(\Omega)$ for system (\ref{2.1}), we take the numerical initial condition $f^0_{\J}$ as the  cell average of $f_0(x)$ with the cell centered at $x_{\J}.$

Note that the scheme (\ref{3.1}) involves the values at boundary points. It is necessary to prescribe the numerical boundary conditions.  
At boundary point $x_{\J}$ ($\J \in \partial \mathcal{J}$) and time $n\Delta t$,
we denote the numerical incoming variables as $f_{-,\J}^n$,  
where the minus sign indicates the components corresponding to the incoming variables, i.e. the component of $f_-(n\Delta t,x_{\J})$. On the other hand, we take the numerical outgoing variables as the numerical solution at the interior grid point adjacent to the boundary point.
For instance, the numerical outgoing variables at the boundary point $(1,x_{2,j_2},\cdots,x_{d,j_d})$ and time $n\Delta t$ are chosen as $f^n_{+,(N-1,j_2,\cdots,j_d)}$, where the plus sign indicates the components corresponding to the outgoing variables, i.e. the component of $f_+(n\Delta t,1,x_{2,j_2},\cdots,x_{d,j_d})$.   

\begin{remark} \label{R1}
	The upwind scheme (\ref{3.1}) could be defined up to the boundary points, since for $\lambda_{ki}>0 $ (resp. $\lambda_{ki}<0$) the scheme remains valid even when $\J_i = N$ (resp. $\J_i=0$). Nevertheless, for simplicity of notations and presentation, we restrict $\J$ to the interior index set $\mathcal{J}$. This choice introduces a slight asymmetry in the numerical boundary value: the numerical outgoing variables are not taken as the discrete values of the continuous boundary variables, whereas the incoming variables are. 
\end{remark}

With the numerical incoming and outgoing variables prescribed above, the numerical boundary conditions are taken as the discretized version of (\ref{2.2}). Again, there are infinitely many choices and we cannot write them compactly.
A trivial example is to choose \begin{equation} \label{3.3} 
	f^n_{-,\J}=0,\quad \forall \J\in \partial \mathcal{J},
\end{equation} 
which is the disretization version of (\ref{2.3}).

\begin{remark}
	Numerical boundary control refers to the selection of suitable numerical boundary conditions to ensure the stability of the schemes, where the parameters governing the dependence between numerical incoming and outgoing  variables serve as the control variables. The trivial numerical boundary condition (\ref{3.3}) corresponds to setting all these parameters to zero. Several non-trivial examples will be presented in Section~\ref{S4}.
\end{remark}


\subsection{Numerical boundary stabilization}

We denote by $f^n := \{f^n_{\J}\}_{\J \in \mathcal{J}}$ the collection of all discrete interior values at time step $n$, which is the numerical solution to (\ref{2.1}) at the time $n\Delta t$. We define the following $\ell^2$-norm:
$$
\| f^n \|_{\ell^2} 
:= \left( \sum_{\J \in \mathcal{J}} 
(f_{\J}^n)^{T} f_{\J}^n \; (\Delta x)^d 
\right)^{1/2}.
$$

With this norm, the numerical exponential stability is defined as follows:
\begin{definition} \label{D1}
	The numerical schemes (\ref{3.1})-(\ref{3.2}) are called exponentially stable in the sense of $\ell^2$-norm, if there exist  positive constants $C$ and $\nu$, independent of $\Delta x$ and $\Delta t$, such that,  for any initial value $f^0$, the corresponding numerical solution satisfies
	$$
	\|f^n \|_{\ell^2}\leq Ce^{-\nu (n\Delta t)} \| f^0\|_{\ell^2}.
	$$
\end{definition}

Motivated by the proof in the continuous case \cite{yang2025}, we introduce the corresponding numerical Lyapunov function $L(\cdot)$:
$$
\begin{aligned}
	L(f^n)=&(\Delta x)^d\sum_{\J\in \mathcal{J}} (f_{\J}^{n})^T\left(\alpha \Lambda_0+\exp\left(-\sum_{l=1}^d \Lambda_l x_{l,j_l}\right)\right) f_{\J}^{n} \\
	=&(\Delta x)^d\sum_{\J\in \mathcal{J}} \sum_{k=1}^K (f_{k,\J}^{n})^2 \left(\alpha \lambda_{k0}+\exp\left(-\sum_{l=1}^d \lambda_{kl}x_{l,j_l}\right)\right),
\end{aligned}
$$
with $\alpha$ a positive constant to be determined. Here, we have used the  
notations $\Lambda_i=\diag\{\lambda_{1i},\cdots,\lambda_{Ki}\}$ for $i=0,1,\cdots,d.$

Denote the maximum (resp. minimum) of the entries of the strictly positive diagonal matrices $\exp\left(-\sum_{l=1}^d \Lambda_l x_l\right)$ (over $\bar{\Omega}$) and  $\Lambda_0$ as $M,\lambda_M$ (resp. $m,\lambda_m$). We first choose $\alpha \geq M/\lambda_M$, consequently
\begin{equation} \label{3.4}
	\alpha\lambda_m  \| f^n\|_{\ell^2}^2 \leq (m+\alpha \lambda_m) \| f^n\|_{\ell^2}^2 \leq L(f^n)\leq (M+\alpha \lambda_M)\| f^n\|_{\ell^2}^2 \leq 2\lambda_M \alpha \| f^n\|_{\ell^2}^2.
\end{equation}

With the above preparations, we now state the first lemma, which deals with the advection part (\ref{3.1}) of the scheme. We exploit Assumption \ref{A1} to establish interior numerical damping and introduce suitable boundary control laws to achieve stabilization.
\begin{lemma} \label{L2}
	Assuming that the time step $\Delta t$ satisfies
	\begin{equation} \label{3.5}
		\begin{aligned}
			\sum_{i=1}^d \frac{\Delta t}{\Delta x} |\lambda_{ki}|\leq 1,\quad k=1,\cdots,K,
		\end{aligned}
	\end{equation}
	and the numerical boundary conditions ensure $\mathcal{B}\leq  0$ with $\mathcal{B}$ defined below, 
	then for the numerical scheme (\ref{3.1}), we have
	$$
	\frac{L(\tilde f^n)-L(f^n)}{\Delta t}\leq  -\frac{m\mu}{2\lambda_M\alpha} L(f^n)
	$$
	where $\mu$ is a positive number independent of $\Delta x$ and $\Delta t.$
	
\end{lemma}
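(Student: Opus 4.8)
\noindent\emph{Proof strategy.} The plan is to track the discrete Lyapunov functional through the advection half-step, isolate an interior damping contribution, and dispose of the boundary terms via the hypothesis on $\mathcal B$. Since $\Delta x=1/N$, the CFL restriction (\ref{3.5}) says that $\sigma_{ki}:=|\lambda_{ki}|\Delta t/\Delta x$ obeys $\sum_{i=1}^{d}\sigma_{ki}\le 1$ for every $k$, so the componentwise scheme (\ref{3.1}) reads
\begin{equation*}
\tilde f_{k,\J}^{n}=\Big(1-\sum_{i}\sigma_{ki}\Big)f_{k,\J}^{n}+\sum_{i:\lambda_{ki}>0}\sigma_{ki}f_{k,\J-\E_i}^{n}+\sum_{i:\lambda_{ki}<0}\sigma_{ki}f_{k,\J+\E_i}^{n}.
\end{equation*}
First I would apply Jensen's inequality for $t\mapsto t^{2}$ to obtain the pointwise bound
\begin{equation*}
(\tilde f_{k,\J}^{n})^{2}-(f_{k,\J}^{n})^{2}\le\sum_{i:\lambda_{ki}>0}\sigma_{ki}\big[(f_{k,\J-\E_i}^{n})^{2}-(f_{k,\J}^{n})^{2}\big]+\sum_{i:\lambda_{ki}<0}\sigma_{ki}\big[(f_{k,\J+\E_i}^{n})^{2}-(f_{k,\J}^{n})^{2}\big],
\end{equation*}
then multiply by the weight $w_{k,\J}:=\alpha\lambda_{k0}+\exp(-\sum_{l}\lambda_{kl}x_{l,j_l})$, sum over $k$ and $\J\in\mathcal J$, and multiply by $(\Delta x)^{d}$ to get an upper bound for $L(\tilde f^{n})-L(f^{n})$.

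Next I would perform a discrete summation by parts. For fixed $k$ and each direction $i$ with $\lambda_{ki}\ne 0$, reindexing the shifted sum $\sum_{\J\in\mathcal J}w_{k,\J}(f_{k,\J\mp\E_i}^{n})^{2}$ by $\J\mapsto\J\pm\E_i$ aligns it with $\sum_{\J}w_{k,\J}(f_{k,\J}^{n})^{2}$, separating the contribution into an \emph{interior} part carrying the weighted difference $[w_{k,\J+\E_i}-w_{k,\J}](f_{k,\J}^{n})^{2}$ (respectively $[w_{k,\J-\E_i}-w_{k,\J}](f_{k,\J}^{n})^{2}$ when $\lambda_{ki}<0$) and a \emph{boundary} part supported on the index slices adjacent to $\partial\mathcal J$, which couples the prescribed incoming values to their neighbouring interior (``outgoing'') values. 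I would collect all these boundary terms into the quadratic form $\mathcal B$ — the discrete analogue of the boundary integral in (\ref{2.6}) — and discard it using the standing hypothesis $\mathcal B\le 0$.

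For the interior part the key identity is
\begin{equation*}
w_{k,\J+\E_i}-w_{k,\J}=\exp\Big(-\sum_{l}\lambda_{kl}x_{l,j_l}\Big)\big(e^{-\lambda_{ki}\Delta x}-1\big),
\end{equation*}
which is strictly negative for $\lambda_{ki}>0$, and similarly $w_{k,\J-\E_i}-w_{k,\J}<0$ for $\lambda_{ki}<0$; since $\Delta x=1/N\le 1$ and the $\lambda_{ki}$ are fixed, one has $1-e^{-|\lambda_{ki}|\Delta x}\ge c\,|\lambda_{ki}|\Delta x$ for a constant $c>0$ depending only on $\max_{k,i}|\lambda_{ki}|$. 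Combining this with $\exp(-\sum_{l}\lambda_{kl}x_{l,j_l})\ge m$ and $\sigma_{ki}=|\lambda_{ki}|\Delta t/\Delta x$, the interior part attached to $(k,i)$ is at most $-c\,m\,\lambda_{ki}^{2}\,\Delta t\,(\Delta x)^{d}\sum_{\J}(f_{k,\J}^{n})^{2}$; summing over $i$ and $k$ and using Assumption \ref{A1} — which is exactly the statement that $\mu_{0}:=\min_{k}\sum_{i=1}^{d}\lambda_{ki}^{2}>0$, i.e. $\sum_{i}\Lambda_i^{2}$ is strictly positive definite — gives $L(\tilde f^{n})-L(f^{n})\le\mathcal B-c\,m\,\mu_{0}\,\Delta t\,\|f^{n}\|_{\ell^{2}}^{2}\le -c\,m\,\mu_{0}\,\Delta t\,\|f^{n}\|_{\ell^{2}}^{2}$. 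Dividing by $\Delta t$ and using $\|f^{n}\|_{\ell^{2}}^{2}\ge L(f^{n})/(2\lambda_{M}\alpha)$ from (\ref{3.4}) yields the claim with $\mu:=c\,\mu_{0}$, independent of $\Delta x$ and $\Delta t$.

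The hard part will be the summation-by-parts bookkeeping: keeping straight which index slices produce interior damping and which produce boundary terms (complicated by the asymmetry noted in Remark \ref{R1}, since (\ref{3.1}) actually uses only incoming boundary values), and checking that the interior weighted differences carry the right sign together with an $O(\Delta x)$ lower bound uniform in the mesh, so that the rate $\mu$ does not degenerate as $\Delta x\to 0$. The sign of $w_{k,\J\pm\E_i}-w_{k,\J}$ is precisely where the special exponential form of the Lyapunov weight and Assumption \ref{A1} come into play.
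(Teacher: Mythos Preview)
Your proposal is correct and follows essentially the same route as the paper: convex-combination form of the upwind step, Jensen's inequality for the squares, discrete summation by parts against the exponential weight to split into an interior damping term $\mathcal I$ and a boundary term $\mathcal B$, then Assumption~\ref{A1} and (\ref{3.4}) to close. The only cosmetic difference is in how the $\Delta x$-uniform lower bound on the damping is obtained: the paper sets $\tilde\mu_k(\Delta x)=\sum_{i}\lambda_{ki}\bigl[1-e^{-|\lambda_{ki}|\Delta x}\bigr]/\Delta x$ and takes $\mu_k=\min_{\Delta x\le 1}\tilde\mu_k(\Delta x)$ via a continuity/compactness argument (using $\tilde\mu_k(\Delta x)\to\sum_i\lambda_{ki}^2>0$), whereas you use the equivalent elementary inequality $1-e^{-t}\ge c\,t$ on $[0,\max_{k,i}|\lambda_{ki}|]$ to get $\mu=c\,\mu_0$ directly.
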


\begin{proof}
	For each $k=1,\cdots,K$, we rewrite the advection scheme (\ref{3.1}) as
	$$
	\tilde{f}_{k,\J}^{n}=\left(1-\sum_{i:\lambda_{ki}>0}\frac{\Delta t}{\Delta x} \lambda_{ki}+\sum_{i:\lambda_{ki}<0}\frac{\Delta t}{\Delta x} \lambda_{ki}\right)f_{k,\J}^{n}+\sum_{i:\lambda_{ki}>0}\frac{\Delta t}{\Delta x} \lambda_{ki}f_{k,\J-\E_i}^{n}+\sum_{i:\lambda_{ki}<0}\frac{\Delta t}{\Delta x} (-\lambda_{ki})f_{k,\J+\E_i}^{n}.
	$$

	Thanks to (\ref{3.5}), all the coefficients in the right-hand of the above equality are between $0$ and $1$, hence
	by Jensen's inequality, we have
	$$
	(\tilde{f}_{k,\J}^{n})^2\leq \left(1-\sum_{i:\lambda_{ki}>0}\frac{\Delta t}{\Delta x} \lambda_{ki}+\sum_{i:\lambda_{ki}<0}\frac{\Delta t}{\Delta x} \lambda_{ki}\right)(f_{k,\J}^{n})^2+\sum_{i:\lambda_{ki}>0}\frac{\Delta t}{\Delta x} \lambda_{ki}(f_{k,\J-\E_i}^{n})^2+\sum_{i:\lambda_{ki}<0}\frac{\Delta t}{\Delta x} (-\lambda_{ki})(f_{k,\J+\E_i}^{n})^2.
	$$
	
	By the definitions of $L(\tilde{f}^n)$ and $L(f^n)$ and the last inequality, we have
	$$
	\begin{aligned}
		\frac{L(\tilde f^n)-L(f^n)}{\Delta t}=&(\Delta x)^d\sum_{\J\in \mathcal{J}} \sum_{k=1}^K \frac{(\tilde f_{k,\J}^{n})^2-(f_{k,\J}^{n})^2}{\Delta t} \left(\alpha\lambda_{k0}+\exp\left(-\sum_{l=1}^d \lambda_{kl}x_{l,j_l}\right)\right)\\
		\leq & (\Delta x)^d \sum_{k=1}^K 
		\sum_{i:\lambda_{ki}>0}\sum_{\J\in \mathcal{J}}\frac{\lambda_{ki}}{\Delta x} \left( (f_{k,\J-\E_i}^{n})^2-(f_{k,\J}^{n})^2 \right)\left(\alpha\lambda_{k0}+\exp\left(-\sum_{l=1}^d \lambda_{kl}x_{l,j_l}\right)\right) \\
		&+(\Delta x)^d \sum_{k=1}^K\sum_{i:\lambda_{ki}<0}\sum_{\J\in \mathcal{J}}\frac{\lambda_{ki}}{\Delta x} \left((f_{k,\J}^{n})^2-(f_{k,\J+\E_i}^{n})^2\right) \left(\alpha\lambda_{k0}+\exp\left(-\sum_{l=1}^d \lambda_{kl}x_{l,j_l}\right)\right)
		\\
		:=&I+II. 
	\end{aligned}
	$$
	
	For $I$, we shift each index $\J-\E_i$ to $\J$, which is analogous to the integration by parts in the continuous case. Similarly, we shift each index $\J+\E_i$ to $\J$ for $II.$ Consequently, we have
	$$
	\begin{aligned}
		\frac{L(\tilde f^n)-L(f^n)}{\Delta t}=&(\Delta x)^d\sum_{k=1}^K  
		\sum_{i:\lambda_{ki}>0} \sum_{\J\in \mathcal{J}} \frac{\exp( -\lambda_{ki}\Delta x)-1}{\Delta x} \lambda_{ki} (f_{k,\J}^{n})^2 \left(\exp\left(-\sum_{l=1}^d \lambda_{kl}x_{l,j_l}\right)\right) 
		\\
		&+(\Delta x)^d \sum_{k=1}^K  
		\sum_{i:\lambda_{ki}<0} \sum_{\J\in \mathcal{J}}\frac{1-\exp( \lambda_{ki}\Delta x)}{\Delta x} \lambda_{ki} (f_{k,\J}^{n})^2 \left(\exp\left(-\sum_{l=1}^d \lambda_{kl}x_{l,j_l}\right)\right) \\
		&+(\Delta x)^{d-1} \sum_{k=1}^K \sum_{i:\lambda_{ki}>0}\sum_{\substack{j_i=0}} \lambda_{ki}(f_{k,\J}^n)^2\left(\alpha\lambda_{k0}+\exp\left(-\sum_{l=1}^d \lambda_{kl}x_{l,j_l}\right)\exp(-\lambda_{ki}\Delta x)\right) \\
		&-(\Delta x)^{d-1} \sum_{k=1}^K \sum_{i:\lambda_{ki}>0} \sum_{\substack{j_i=N-1}} \lambda_{ki}(f_{k,\J}^n)^2\left(\alpha\lambda_{k0}+\exp\left(-\sum_{l=1}^d \lambda_{kl}x_{l,j_l}\right)\right)\\
		&+(\Delta x)^{d-1} \sum_{k=1}^K \sum_{i:\lambda_{ki}<0}\sum_{\substack{j_i=1}} \lambda_{ki}(f_{k,\J}^n)^2\left(\alpha\lambda_{k0}+\exp\left(-\sum_{l=1}^d \lambda_{kl}x_{l,j_l}\right)\right) \\
		&-(\Delta x)^{d-1} \sum_{k=1}^K \sum_{i:\lambda_{ki}<0} \sum_{\substack{j_i=N}} \lambda_{ki}(f_{k,\J}^n)^2\left(\alpha\lambda_{k0}+\exp\left(-\sum_{l=1}^d \lambda_{kl}x_{l,j_l}\right)\exp(\lambda_{ki}\Delta x)\right)\\
		:=&\mathcal{I}+\mathcal{B}.
	\end{aligned}
	$$
	Here, $\mathcal{I}$ denotes the first two terms on the right-hand side of the last equation, representing the interior terms, while $\mathcal{B}$ refers to the remaining four boundary terms. Note that $\mathcal{B}$ tends to (\ref{2.6}) formally when $\Delta x \to 0.$
	Given a direction $i$, the notation $\sum\limits_{j_i=0}$ indicates that the summation runs over all the multi-index $\J$ whose $i$-th component $j_i=0$, and other compoents are in $\{1,\cdots,N-1\}.$ The other three notations $\sum\limits_{j_i=N-1},\sum\limits_{j_i=1},\sum\limits_{j_i=N}$ are defined similarly.  \\
	
	\textbf{For the interior terms $\mathcal{I}$:} Note that for all $\Delta x \in (0,1]$, we have
	\begin{equation*} 
		\begin{aligned}
			\frac{\exp( -\lambda_{ki}\Delta x)-1}{\Delta x} \lambda_{ki}&<0,\quad \text{for}\ \lambda_{ki}>0,\\ 
			\frac{1-\exp( \lambda_{ki}\Delta x)}{\Delta x} \lambda_{ki}&<0,\quad \text{for}\  \lambda_{ki}<0.
		\end{aligned}
	\end{equation*}
	Thanks to Assumption \ref{A1}, for each $k=1,\cdots,K$, there exists at least one direction $i_0$ such that $\lambda_{ki_0}\neq 0$. Hence we have
	$$
	\mathcal{I}= -(\Delta x)^d\sum_{k=1}^K  \sum_{\J\in \mathcal{J}}
	\tilde \mu_k (f_{k,\J}^{n})^2 \left(\exp(-\sum_{l=1}^d \lambda_{kl}x_{l,\J_l})\right)
	$$
	where
	$$
	\tilde \mu_k=\tilde \mu_k(\Delta x)=-\left[\sum_{i:\lambda_{ki}>0} \frac{\exp( -\lambda_{ki}\Delta x)-1}{\Delta x} \lambda_{ki}+ \sum_{i:\lambda_{ki}<0} \frac{1-\exp( \lambda_{ki}\Delta x)}{\Delta x} \lambda_{ki}\right]>0
	$$
	for each $k=1,\cdots,K$. 
	Note that
	$$
	\lim_{\Delta x \to 0+}\tilde \mu_k(\Delta x) =\sum_{i=1}^{d} \lambda_{ki}^2>0
	$$
	and $\Delta x\leq 1$,
	consequently we can choose 
	$$
	\mu_k=\min_{\Delta x\leq 1} \tilde \mu_k(\Delta x)>0
	$$
	which is  independent of $\Delta x$.
	Let $\mu=\min\limits_{k=1,\cdots,K}\{\mu_1,\cdots,\mu_k\}>0$ 
	and recall that the minimum of  the strictly positive continuous  function $\exp(-\sum_{i=1}^d \Lambda_i x_i)$ on $\bar{\Omega}$ is $m$, hence 
	$$
	\mathcal{I}\leq -m\mu\|f^n\|_{\ell^2}^2 \leq -\frac{m\mu}{2\lambda_M\alpha}L(f^n),
	$$
	where we have used (\ref{3.4}) in the last inequality sign.\\

	\textbf{For the boundary terms $\mathcal{B}$}: 
	The contributions of incoming variables to  the boundary term  $\mathcal{B}$,
	$$
	\begin{aligned}
		&(\Delta x)^{d-1} \sum_{k=1}^K \sum_{i:\lambda_{ki}>0} \sum_{\substack{j_i=0}} \lambda_{ki}(f_{k,\J}^n)^2\left(\alpha\lambda_{k0}+\exp(-\sum_{l=1}^d \lambda_{kl}x_{l,j_l})\exp(-\lambda_{ki}\Delta x)\right) \\
		&-(\Delta x)^{d-1} \sum_{k=1}^K \sum_{i:\lambda_{ki}<0} \sum_{\substack{j_i=N}} \lambda_{ki}(f_{k,\J}^n)^2\left(\alpha\lambda_{k0}+\exp(-\sum_{l=1}^d \lambda_{kl}x_{l,j_l})\exp(\lambda_{ki}\Delta x)\right)\\
	\end{aligned}
	$$
	are always non-negative.
	On the other hand, for  $\lambda_{ki} < 0$ and $j_i = 1$, or $\lambda_{ki} > 0$ and $j_i = N-1$, the term $f_{k,\J}^n$ corresponds to the outgoing variables, and the contributions to $\mathcal{B}$
	$$
	\begin{aligned}
		&-(\Delta x)^{d-1} \sum_{k=1}^K \sum_{i:\lambda_{ki}>0} \sum_{\substack{j_i=N-1}} \lambda_{ki}(f_{k,\J}^n)^2\left(\alpha\lambda_{k0}+\exp(-\sum_{l=1}^d \lambda_{kl}x_{l,j_l})\right)\\
		&+(\Delta x)^{d-1} \sum_{k=1}^K \sum_{i:\lambda_{ki}<0} \sum_{\substack{j_i=1}} \lambda_{ki}(f_{k,\J}^n)^2\left(\alpha\lambda_{k0}+\exp(-\sum_{l=1}^d \lambda_{kl}x_{l,j_l})\right) \\
	\end{aligned}
	$$
	are always non-positive. Consequently, $\mathcal{B}\leq 0$ holds, at least when the trivial numerical boundary condition (\ref{3.3}) is imposed.
	Several non-trivial numerical boundary conditions will be presented in Section \ref{S4}.
\end{proof}

\begin{remark} \label{R2}
	Formally, a smaller value of $\mathcal{B}$ corresponds to a stronger dissipative effect of the numerical scheme. Note that
	\begin{equation*} 
		\begin{aligned} \mathcal{B} \geq &-(\Delta x)^{d-1} \sum_{k=1}^K \sum_{i:\lambda_{ki}>0} \sum_{\substack{j_i=N-1}} \lambda_{ki}(f_{k,\J}^n)^2\left(\alpha\lambda_{k0}+\exp(-\sum_{l=1}^d \lambda_{kl}x_{l,j_l})\right)\\ &+(\Delta x)^{d-1} \sum_{k=1}^K \sum_{i:\lambda_{ki}<0} \sum_{\substack{j_i=1}} \lambda_{ki}(f_{k,\J}^n)^2\left(\alpha\lambda_{k0}+\exp(-\sum_{l=1}^d \lambda_{kl}x_{l,j_l})\right), \end{aligned} 
	\end{equation*}
	where the equality holds if and only if the trivial numerical boundary condition (\ref{3.3}) is imposed, corresponding to the most dissipative case. This phenomenon will be illustrated in Simulation II, Section \ref{S4}.
\end{remark}

We now turn to the collision scheme (\ref{3.2}). The analysis relies on the structural stability condition presented in Lemma \ref{L1}, and  does not involve any boundary term. Moreover, rewriting (\ref{3.2}) into matrix form will facilitate the stability analysis. To this end, for each $\J=(j_1,\cdots,j_d)\in \mathcal{J}$, we denote $f_{\J}^n=(f_{1,\J}^n,\cdots,f_{K,\J}^n)^T$. Consequently, the numerical scheme (\ref{3.2}) becomes
$$
f^{n+1}_{\J}=\tilde f^n_{\J}+\Delta tQ\tilde f^n_{\J}.
$$
We also use the notations
$
\Lambda_{\J}=\diag\left(\exp\left(-\sum_{i=1}^d \lambda_{i1}x_{i,j_i}\right),\cdots,\exp\left(-\sum_{i=1}^d \lambda_{iK}x_{i,j_i}\right)\right)
$ and  
\begin{equation*}
	\begin{pmatrix}
		\tilde u_{\J}^n \\
		\tilde q_{\J}^n
	\end{pmatrix}=P\tilde f_{\J}^n
\end{equation*}
with the same partition in Lemma \ref{L1}, that is, $\tilde u_{\J}^n \in \mathbb{R}^{K-r},\tilde q_{\J}^n \in \mathbb{R}^{r}.$

\begin{lemma} \label{L3}
	For any $\epsilon>0,$ we have
	$$
	\frac{L(f^{n+1})-L(\tilde f^n)}{\Delta t}\leq \epsilon \|\tilde u^n\|^2_{\ell^2}+(\frac{C^2_1}{4\epsilon}+C_2-\alpha \lambda)\|\tilde q^n\|^2_{\ell^2}+\Delta t \tilde M L(\tilde f^n),
	$$
	where the positive constants $C_1,C_2,\tilde M$ are  independent of $\Delta x$ and $\Delta t.$ 
\end{lemma}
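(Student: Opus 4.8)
The plan is to work with the matrix form of the collision step, $f^{n+1}_{\J}=(I+\Delta t\,Q)\tilde f^n_{\J}$, and to expand the weighted quadratic form defining $L$. Writing $A_{\J}:=\alpha\Lambda_0+\Lambda_{\J}$ (a fixed symmetric positive definite matrix at each grid point whose entries stay in $[\,\alpha\lambda_m+m,\ \alpha\lambda_M+M\,]$ uniformly in $\J$), one gets, pointwise in $\J$,
$$
(f^{n+1}_{\J})^T A_{\J} f^{n+1}_{\J}-(\tilde f^n_{\J})^T A_{\J}\tilde f^n_{\J}=\Delta t\,(\tilde f^n_{\J})^T\!\bigl(Q^T A_{\J}+A_{\J}Q\bigr)\tilde f^n_{\J}+(\Delta t)^2(\tilde f^n_{\J})^T Q^T A_{\J}Q\,\tilde f^n_{\J}.
$$
Dividing by $\Delta t$, multiplying by $(\Delta x)^d$ and summing over $\J\in\mathcal{J}$ splits $\bigl(L(f^{n+1})-L(\tilde f^n)\bigr)/\Delta t$ into a leading part generated by $Q^TA_{\J}+A_{\J}Q$ and a part carrying an extra factor $\Delta t$, generated by $Q^TA_{\J}Q$. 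The latter is the easy one: since $Q$ and $A_{\J}$ are bounded uniformly in $\J$, one has $(\tilde f^n_{\J})^T Q^T A_{\J}Q\,\tilde f^n_{\J}\le \tilde M_0\,|\tilde f^n_{\J}|^2$ with $\tilde M_0$ independent of the discretization, and then (\ref{3.4}) gives $\Delta t\,\tilde M_0\|\tilde f^n\|^2_{\ell^2}\le \Delta t\,\tilde M\,L(\tilde f^n)$ with $\tilde M:=\tilde M_0/(\alpha\lambda_m)$, which is exactly the last term in the claimed estimate.

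For the leading part I would split $Q^TA_{\J}+A_{\J}Q=\alpha\bigl(Q^T\Lambda_0+\Lambda_0 Q\bigr)+\bigl(Q^T\Lambda_{\J}+\Lambda_{\J}Q\bigr)$ and treat the two pieces with Lemma \ref{L1}. From (\ref{2.5}) the matrix $\Lambda_0 Q$ is symmetric (its right-hand side $-P^T\bigl(\begin{smallmatrix}0&0\\0&\Lambda\end{smallmatrix}\bigr)P$ is symmetric because $\Lambda$ is diagonal), hence $Q^T\Lambda_0+\Lambda_0 Q=2\Lambda_0 Q$, and in the variables $P\tilde f^n_{\J}=(\tilde u^n_{\J},\tilde q^n_{\J})^T$,
$$
\alpha\,(\tilde f^n_{\J})^T\!\bigl(Q^T\Lambda_0+\Lambda_0 Q\bigr)\tilde f^n_{\J}=-2\alpha\,(\tilde f^n_{\J})^T P^T\!\begin{pmatrix}0&0\\0&\Lambda\end{pmatrix}\!P\,\tilde f^n_{\J}=-2\alpha\,(\tilde q^n_{\J})^T\Lambda\,\tilde q^n_{\J}\le-\alpha\lambda\,|\tilde q^n_{\J}|^2,
$$
where $\lambda>0$ is a constant depending only on $\Lambda$; this produces the $-\alpha\lambda$ in the coefficient of $\|\tilde q^n\|^2_{\ell^2}$.

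The remaining term $(\tilde f^n_{\J})^T\!\bigl(Q^T\Lambda_{\J}+\Lambda_{\J}Q\bigr)\tilde f^n_{\J}$ is the one that needs care, and this is where the structure (\ref{2.4}) is used rather than full dissipativity. Writing $P^{-1}=(R_1\ R_2)$ with $R_1\in\mathbb{R}^{K\times(K-r)}$ and $R_2\in\mathbb{R}^{K\times r}$, (\ref{2.4}) gives $Q\tilde f^n_{\J}=P^{-1}\bigl(\begin{smallmatrix}0&0\\0&-\Lambda\end{smallmatrix}\bigr)P\tilde f^n_{\J}=-R_2\Lambda\,\tilde q^n_{\J}$, so, using the symmetry of $\Lambda_{\J}$ and the scalar-transpose trick, $(\tilde f^n_{\J})^T\!\bigl(Q^T\Lambda_{\J}+\Lambda_{\J}Q\bigr)\tilde f^n_{\J}=-2(\tilde q^n_{\J})^T\Lambda R_2^T\Lambda_{\J}\tilde f^n_{\J}$. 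Decomposing $\tilde f^n_{\J}=R_1\tilde u^n_{\J}+R_2\tilde q^n_{\J}$, the $\tilde q$–$\tilde q$ part $-2(\tilde q^n_{\J})^T\Lambda R_2^T\Lambda_{\J}R_2\tilde q^n_{\J}$ is bounded by $C_2|\tilde q^n_{\J}|^2$, while the cross term $-2(\tilde q^n_{\J})^T\Lambda R_2^T\Lambda_{\J}R_1\tilde u^n_{\J}$ is only \emph{linear} in $\tilde u^n_{\J}$, so Young's inequality gives $\le\epsilon|\tilde u^n_{\J}|^2+\tfrac{C_1^2}{4\epsilon}|\tilde q^n_{\J}|^2$; here $C_1,C_2$ may be taken as the suprema of $2\|\Lambda R_2^T\Lambda_{\J}R_1\|$ and $2\|\Lambda R_2^T\Lambda_{\J}R_2\|$ over all grid points, which are finite and independent of $\Delta x,\Delta t$ since the diagonal entries of $\Lambda_{\J}$ lie in $[m,M]$. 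Summing the pointwise bounds over $\J$ and multiplying by $(\Delta x)^d$ yields exactly the asserted inequality. The main obstacle is precisely this last step: because $Q$ annihilates the $\tilde u$-component (the block structure in (\ref{2.4})), the non-dissipated direction $\tilde u$ enters the collision energy balance only through the off-diagonal coupling $R_2^T\Lambda_{\J}R_1$ and never with a quadratic $\tilde u$–$\tilde u$ term at order $\Delta t$, so its contribution is absorbable into an arbitrarily small multiple of $\|\tilde u^n\|^2_{\ell^2}$ — a quantity that will later be controlled, not by the collision step, but by the interior damping of Lemma \ref{L2}.
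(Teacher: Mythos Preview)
Your proof is correct and follows essentially the same route as the paper: the same expansion of $L$ under the collision update, the same splitting into a leading term and a $\Delta t$-higher-order term, the same use of (\ref{2.5}) to extract the $-\alpha\lambda\|\tilde q^n\|_{\ell^2}^2$ dissipation, and the same use of the block structure (\ref{2.4}) to see that the $\Lambda_{\J}$-coupling produces no $\tilde u$--$\tilde u$ quadratic term. The only cosmetic difference is notational: you work with the column partition $P^{-1}=(R_1\ R_2)$, whereas the paper names the same objects as the blocks $\Lambda_{\J}^{21}=R_2^T\Lambda_{\J}R_1$ and $\Lambda_{\J}^{22}=R_2^T\Lambda_{\J}R_2$ of $P^{-T}\Lambda_{\J}P^{-1}$.
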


\begin{proof}
	By the definition of $L(f^{n+1})$ and $L(f^n)$, and applying the numerical scheme (\ref{3.2}), we have
	$$
	\begin{aligned}
		\frac{L(f^{n+1})-L(\tilde f^n)}{\Delta t}=&\frac{(\Delta x)^d}{\Delta t} \sum_{\J \in \mathcal{J}} \sum_{k=1}^N \Bigl( (f^{n+1}_{k,\J})^2-(\tilde f^{n}_{k,\J})^2\Bigr)\left(\alpha \lambda_{k0}+\exp\left(-\sum_{l=1}^d \lambda_{kl}x_{l,j_l}\right)\right)\\
		=&\frac{(\Delta x)^d}{\Delta t}\sum_{\J \in \mathcal{J}} (f_{\J}^{n+1})^T (\alpha \Lambda_0+\Lambda_{\J})f_{\J}^{n+1}-\frac{(\Delta x)^d}{\Delta t}\sum_{\J \in \mathcal{J}} (\tilde f_{\J}^{n})^T (\alpha \Lambda_0+\Lambda_{\J})\tilde f_{\J}^{n} \\
		=&\frac{(\Delta x)^d}{\Delta t}\sum_{\J \in \mathcal{J}} (\tilde f^n_{\J}+\Delta tQ\tilde f^n_{\J})^T (\alpha \Lambda_0+\Lambda_{\J})	(\tilde f^n_{\J}+\Delta tQ\tilde f^n_{\J})\\&-\frac{(\Delta x)^d}{\Delta t}\sum_{\J \in \mathcal{J}} (\tilde f_{\J}^{n})^T (\alpha \Lambda_0+\Lambda_{\J})\tilde f_{\J}^{n} \\
		=&2\sum_{\J \in \mathcal{J}} \tilde f^n_{\J}Q^T(\alpha \Lambda_0+\Lambda_{\J})\tilde f^n_{\J}(\Delta x)^d+\Delta t \sum_{\J \in \mathcal{J}} (Q\tilde f^n_{\J})^T(\alpha \Lambda_0+\Lambda_{\J})(Q\tilde f^n_{\J})(\Delta x)^d\\
		:=&\mathcal{L}+\mathcal{H}.
	\end{aligned}
	$$
	Here $\mathcal{L}$ and $\mathcal{H}$ refer to the lower and higher-order term of $\Delta t$, respectively.
	
	\textbf{For $\mathcal{H}$}, we have
	$$
	\begin{aligned}
		\mathcal{H}=&\Delta t \sum_{\J \in \mathcal{J}}(Q\tilde{f}_{\J}^{n})^T(\Lambda_{\J}+\alpha \Lambda_0)(Q\tilde{f}_{\J}^{n}) (\Delta x)^d  \\
		\leq& \Delta t(M+\lambda_M \alpha)\sum_{\J\in \mathcal{J}}(Q\tilde{f}_{\J}^{n})^T(Q\tilde{f}_{\J}^{n}) (\Delta x)^d\\
		\leq &\Delta t(M+\lambda_M \alpha)\| Q\|^2\| \tilde{f}^{n}\|_{\ell^2}^2 \\
		\leq & \Delta t\frac{M+\lambda_M \alpha }{\lambda_m\alpha} \| Q\|^2L(\tilde 
		f^n),
	\end{aligned}
	$$
	where $M,\lambda_M$ are introduced in (\ref{3.4}). Here and below, $\|Q \|$ denotes the 2-norm (i.e. the largest singular value) of a matrix $Q$. Recall that $\alpha \geq M/\lambda_M$, then
	$$
	\frac{M+\lambda_M \alpha }{\lambda_m\alpha} \| Q\|^2 \leq \frac{2\lambda_M\|Q\|^2}{\lambda_m}:=\tilde M.
	$$
	Consequently, we have $\mathcal{H}\leq \Delta t \tilde M L(\tilde f^n).$

	\textbf{For $\mathcal{L}$}, we have 
	$$
	\begin{aligned}
		\mathcal{L}=&
		2\sum_{\J \in \mathcal{J}} (\tilde{f}_{\J}^n)^T Q^T(\Lambda_{\J}+\alpha \Lambda_0)(\tilde{f}_{\J}^n)(\Delta x)^d \\
		=& 2\sum_{\J\in \mathcal{J}} (P\tilde{f}_{\J}^n)^T (\begin{pmatrix}
			0 & 0\\
			0 & -\Lambda
		\end{pmatrix}  P^{-T}\Lambda_{\J}P^{-1}-\alpha \begin{pmatrix}
			0 & 0\\
			0 & \Lambda
		\end{pmatrix} )(P\tilde{f}_{\J}^n)(\Delta x)^d \\
		\leq & 2\sum_{\J\in \mathcal{J}} (P\tilde{f}_{\J}^n)^T \begin{pmatrix}
			0 & 0\\
			0 & -\Lambda
		\end{pmatrix}  P^{-T}\Lambda_{\J}P^{-1}(P\tilde{f}_{\J}^n)(\Delta x)^d-2\sum_{\J\in \mathcal{J}} \alpha \lambda (\tilde {q}_{\J}^n)^T(\tilde q_{\J}^n)(\Delta x)^d,
	\end{aligned}
	$$
	where we use Lemma \ref{L1} in the second equality sign, and we denote $\lambda>0$  the smallest eigenvalue of the diagonal positive definite matrix $\Lambda.$

	We rewrite $P^{-T}\Lambda_{\J}P^{-1}$ into sub-matrix corresponding to the partition of $P\tilde{f}^n_{\J}$:
	$$
	P^{-T}\Lambda_{\J}P^{-1}=\begin{pmatrix}
		\Lambda_{\J}^{11} & \Lambda_{\J}^{12} \\
		\Lambda_{\J}^{21} & \Lambda_{\J}^{22}
	\end{pmatrix},
	$$
	where $ \Lambda_{\J}^{22} \in \mathbb{R}^{r\times r}.$ Then we have
	$$
	\begin{pmatrix}
		0 & 0\\
		0 & -\Lambda
	\end{pmatrix}  P^{-T}\Lambda_{\J}P^{-1}=\begin{pmatrix}
		0 & 0\\
		-\Lambda\Lambda_{\J}^{21} & -\Lambda\Lambda_{\J}^{22}
	\end{pmatrix}.
	$$
	A key observation is that the left upper sub-matrix of the last matrix is zero. Hence we have
	$$
	\begin{aligned}
		2(P\tilde{f}_{\J}^n)^T \begin{pmatrix}
			0 & 0\\
			0 & -\Lambda
		\end{pmatrix}  P^{-T}\Lambda_{\J}P^{-1}(P\tilde{f}_{\J}^n)=&-2(\tilde u^n_{\J})^T\Lambda\Lambda_{\J}^{21}\tilde q^n_{\J}-2(\tilde q^n_{\J})^T\Lambda\Lambda_{\J}^{22}\tilde q^n_{\J} \\
		\leq& C_1|\tilde u^n_{\J}| |\tilde q^n_{\J}|+C_2|\tilde q^n_{\J}|^2,
	\end{aligned}
	$$
	where $|\tilde u^n_{\J}|=[(\tilde u^n_{\J})^T\tilde u^n_{\J}]^{1/2}$ denotes the usual Euclidean 2-norm of the vector $\tilde u^n_{\J}$, and
	$$
	C_1=2\max_{\J\in \mathcal{J}} (\| \Lambda \Lambda_{\J}^{21}\|),\quad  C_2=2\max_{\J\in \mathcal{J}} (\|\Lambda \Lambda_{\J}^{22}\|)
	$$
	which are positive constants independent of $\Delta x.$
	Consequently, thanks to the Cauchy-Schwarz  inequality, for any $\epsilon>0$, we have 
	$$
	\begin{aligned}
		\mathcal{L}\leq \sum_{\J\in \mathcal{J}} \left(C_1|\tilde u^n_{\J}| |\tilde q^n_{\J}|+C_2|\tilde q^n_{\J}|^2-\alpha \lambda|\tilde q^n_{\J}|^2 \right)(\Delta x)^d
		\leq& \sum_{\J\in \mathcal{J}} \left(\epsilon |\tilde u^n_{\J}|^2+(\frac{C^2_1}{4\epsilon}+C_2-\alpha \lambda)|\tilde q^n_{\J}|^2\right)(\Delta x)^d \\ =&\epsilon \|\tilde u^n\|^2_{\ell^2}+(\frac{C^2_1}{4\epsilon}+C_2-\alpha \lambda)\|\tilde q^n\|^2_{\ell^2}.
	\end{aligned}
	$$
	
	With the estimate of $\mathcal{H}$ and $\mathcal{L}$, we conclude this lemma.
\end{proof}

With the two lemmas proved above, we can state and prove our numerical stability result. 
\begin{theorem} \label{T1}
	Assume that $\Delta t$ satisfies (\ref{3.5}) and
	$$
	\Delta t\leq \frac{m\mu}{8\tilde M\lambda_M \alpha },
	$$
	where $\alpha$ is defined in (\ref{3.6}) below. 
	Then the numerical solution to the schemes (\ref{3.1}) and (\ref{3.2}) is exponentially stable in the sense of $\ell^2$-norm, provided that the numerical boundary conditions ensure $\mathcal{B} \leq 0$.
\end{theorem}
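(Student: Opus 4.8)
The plan is to combine Lemmas \ref{L2} and \ref{L3} into a single one-step decay estimate for the numerical Lyapunov function $L$, iterate it in $n$, and then convert back to the $\ell^2$-norm via the two-sided bound (\ref{3.4}). Writing
$$
\frac{L(f^{n+1})-L(f^n)}{\Delta t}=\frac{L(\tilde f^n)-L(f^n)}{\Delta t}+\frac{L(f^{n+1})-L(\tilde f^n)}{\Delta t},
$$
Lemma \ref{L2} controls the first piece by $-\frac{m\mu}{2\lambda_M\alpha}L(f^n)$ (its hypotheses, the CFL condition (\ref{3.5}) and $\mathcal{B}\le 0$, are exactly those assumed in the theorem), and Lemma \ref{L3} controls the second by $\epsilon\|\tilde u^n\|_{\ell^2}^2+(\frac{C_1^2}{4\epsilon}+C_2-\alpha\lambda)\|\tilde q^n\|_{\ell^2}^2+\Delta t\,\tilde M L(\tilde f^n)$. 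Everything then reduces to absorbing these three error terms into a fraction of the damping term $-\frac{m\mu}{2\lambda_M\alpha}L(f^n)$.

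For this I would first record two elementary reductions. Since $\tilde u_{\J}^n$ is a sub-vector of $P\tilde f_{\J}^n$, summing over $\J\in\mathcal{J}$ gives $\|\tilde u^n\|_{\ell^2}^2\le\|P\|^2\|\tilde f^n\|_{\ell^2}^2$, and the lower bound in (\ref{3.4}) then yields $\|\tilde u^n\|_{\ell^2}^2\le\frac{\|P\|^2}{\alpha\lambda_m}L(\tilde f^n)$. Moreover, Lemma \ref{L2} gives $L(\tilde f^n)\le L(f^n)$ (the factor $1-\frac{m\mu}{2\lambda_M\alpha}\Delta t$ is automatically in $[0,1]$ since $L\ge 0$), so $\epsilon\|\tilde u^n\|_{\ell^2}^2\le\frac{\epsilon\|P\|^2}{\alpha\lambda_m}L(f^n)$ and $\Delta t\,\tilde M L(\tilde f^n)\le\Delta t\,\tilde M L(f^n)$. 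The crucial point is the order in which the parameters are fixed: the constants $m,\mu,\lambda_m,\lambda_M,\|P\|,C_1,C_2,\tilde M$ are all independent of $\alpha$ (and of $\Delta x,\Delta t$), so I would fix $\epsilon$ first, small enough that $\frac{\epsilon\|P\|^2}{\lambda_m}\le\frac{m\mu}{8\lambda_M}$, and only then choose $\alpha$ large enough to satisfy both $\alpha\ge M/\lambda_M$ (needed for (\ref{3.4})) and $\alpha\ge\frac1\lambda\bigl(\frac{C_1^2}{4\epsilon}+C_2\bigr)$ — which is precisely the definition (\ref{3.6}). With this choice the $\|\tilde q^n\|_{\ell^2}^2$ coefficient is nonpositive and the term is discarded, while the $\tilde u$ term contributes at most $\frac{m\mu}{8\lambda_M\alpha}L(f^n)$.

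Collecting these bounds,
$$
\frac{L(f^{n+1})-L(f^n)}{\Delta t}\le\Bigl(-\frac{m\mu}{2\lambda_M\alpha}+\frac{m\mu}{8\lambda_M\alpha}+\Delta t\,\tilde M\Bigr)L(f^n),
$$
and the hypothesis $\Delta t\le\frac{m\mu}{8\tilde M\lambda_M\alpha}$ makes the last term at most $\frac{m\mu}{8\lambda_M\alpha}L(f^n)$, leaving $L(f^{n+1})\le\bigl(1-\frac{m\mu}{4\lambda_M\alpha}\Delta t\bigr)L(f^n)$ with the factor in $[0,1)$. Iterating from $n=0$ and using $1-x\le e^{-x}$ gives $L(f^n)\le e^{-\frac{m\mu}{4\lambda_M\alpha}(n\Delta t)}L(f^0)$; a final application of (\ref{3.4}), namely $\alpha\lambda_m\|f^n\|_{\ell^2}^2\le L(f^n)$ and $L(f^0)\le 2\lambda_M\alpha\|f^0\|_{\ell^2}^2$, then yields $\|f^n\|_{\ell^2}\le\sqrt{2\lambda_M/\lambda_m}\,e^{-\frac{m\mu}{8\lambda_M\alpha}(n\Delta t)}\|f^0\|_{\ell^2}$, i.e. exponential stability with $C=\sqrt{2\lambda_M/\lambda_m}$ and $\nu=\frac{m\mu}{8\lambda_M\alpha}$, both independent of $\Delta x$ and $\Delta t$. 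The main obstacle — indeed the only genuinely delicate point, since the hyperbolic estimates are already packaged in Lemmas \ref{L2} and \ref{L3} — is this bookkeeping of constants: one must check that none of $C_1,C_2,\tilde M,\|P\|$ depends on $\alpha$, so that the chain ``fix $\epsilon$, then fix $\alpha$, then restrict $\Delta t$'' is non-circular and every resulting constant is uniform in the mesh.
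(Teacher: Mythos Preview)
Your proof is correct and follows essentially the same route as the paper: the same splitting of $L(f^{n+1})-L(f^n)$ via the intermediate $\tilde f^n$, the same reduction $\|\tilde u^n\|_{\ell^2}^2\le\frac{\|P\|^2}{\alpha\lambda_m}L(f^n)$ via $L(\tilde f^n)\le L(f^n)$, the same order of parameter selection (fix $\epsilon=\frac{m\mu\lambda_m}{8\|P\|^2\lambda_M}$ first, then $\alpha$ as in (\ref{3.6}), then restrict $\Delta t$), and the same final constants $C=\sqrt{2\lambda_M/\lambda_m}$, $\nu=\frac{m\mu}{8\lambda_M\alpha}$. Your explicit remark that the non-circularity of the parameter choices is the only delicate point is exactly right.
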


\begin{proof}
	Thanks to Lemma \ref{L2}, we have
	$$
	L(\tilde f^{n})\leq L(f^n).
	$$
	It follows that
	$$
	\| \tilde u^n\|_{\ell^2}^2 \leq \|  P \tilde f^n\|_{\ell^2}^2 \leq \| P\|^2\|   \tilde f^n\|_{\ell^2}^2 \leq \frac{\| P\|^2}{\alpha \lambda_m} L(\tilde f^n)\leq \frac{\| P\|^2}{\alpha \lambda_m} L(f^n).
	$$
	By the definition of $L(f^{n+1})$ and $L(f^n)$, and according to Lemma \ref{L2} and \ref{L3}, we have
	$$
	\begin{aligned}
		\frac{L(f^{n+1})-L(f^n)}{\Delta t}&=\frac{L( f^{n+1})-L(\tilde f^n)}{\Delta t}+\frac{L( \tilde f^{n})-L( f^n)}{\Delta t} \\ &\leq \epsilon \|\tilde u^n\|_{\ell^2}^2+(\frac{C^2_1}{4\epsilon}+C_2-\alpha \lambda)\|\tilde q^n\|_{\ell^2}^2+\Delta t \tilde M L(\tilde f^n) -\frac{m\mu}{2\lambda_M\alpha} L( f^n) \\
		&\leq   (-\frac{m\mu}{2\lambda_M\alpha} +\frac{\|P\|^2\epsilon}{\alpha \lambda_m}+\Delta t \tilde M)L(f^n)+(\frac{C^2_1}{4\epsilon}+C_2-\alpha \lambda)\|\tilde q^n\|_{\ell^2}^2.
	\end{aligned}
	$$
	We will choose the positive constants $\alpha$ and $\epsilon$ such that
	$$
	\begin{aligned}
		-\frac{m\mu}{2\lambda_M\alpha} +\frac{\|P\|^2\epsilon}{\alpha \lambda_m}+\Delta t \tilde M&<0,\\
		\frac{C^2_1}{4\epsilon}+C_2-\alpha \lambda& \leq 0.
	\end{aligned}
	$$
	Fix $\alpha$, first we choose 
	$$
	\epsilon=\frac{m\mu \lambda_m}{8\| P\|^2\lambda_M}, \quad \Delta t \leq \frac{m\mu}{8\tilde M\lambda_M \alpha },
	$$
	where $\epsilon$ is independent of $\alpha.$ Therefore, 
	$$
	-\frac{m\mu}{2\lambda_M\alpha} +\frac{\|P\|^2\epsilon}{\alpha \lambda_m}+\Delta t \tilde M \leq -\frac{m\mu}{4\lambda_M \alpha}<0.
	$$
	Then we can choose 
	\begin{equation} \label{3.6}
		\alpha =\max\ \{ \frac{C_1^2}{4\epsilon \lambda}+\frac{C_2}{\lambda},\frac{M}{\lambda_M} \} 
		=\max \ \{ \frac{2C_1^2\lambda_M\| P\|^2}{m \mu \lambda_m \lambda}+\frac{C_2}{\lambda},\frac{M}{\lambda_M} \}
	\end{equation}
	which is independent of $\Delta x$ and $\Delta t.$ Recall that the condition $\alpha \ge M / \lambda_M$ ensures the validity of (\ref{3.4}).
	
	Denote the positive constant $\frac{m\mu}{4\lambda_M \alpha}$ by $\mu_1$, and we have
	\begin{equation} \label{3.7}
		\frac{L(f^{n+1})-L(f^n)}{\Delta t}\leq -\mu_1L(f^n).
	\end{equation}
	Recursively applying (\ref{3.7}), we have 
	$$
	L(f^n)\leq (1-\mu_1\Delta t)^n L(f^0) \leq \exp(-\mu_1 (n\Delta t))L(f^0).
	$$
	Finally, by (\ref{3.4}), we have
	$$
	\| f^n\|^2_{\ell^2} \leq \frac{1}{\alpha \lambda_m}L(f^n) \leq \frac{1}{\alpha \lambda_m}\exp(-\mu_1 (n\Delta t))L(f^0) \leq \frac{2\lambda_M}{\lambda_m}\exp(-\mu_1 (n\Delta t))\| f^0\|_{\ell^2}^2.
	$$
	Clearly, the constants in Definition \ref{D1} are $C=\sqrt{2\lambda_M/\lambda_m}$ and $\nu=\mu_1/2$ .
\end{proof}

\subsection{Semi-implicit numercial schemes}
In simulations of the Boltzmann equation, the source term $Q$ in (\ref{2.1}) is often replaced by $Q/\sigma$, where the small positive constant $\sigma$ is proportional to the mean free path of the particles under consideration \cite{platkowski1988discrete}.   As a result, the source term becomes stiff. According to Theorem \ref{T1}, the time step $\Delta t$ decreases as $\sigma$ decreases (note that $\tilde M=\frac{2\lambda_M\|Q\|^2}{\sigma^2\lambda_m}$ in this case), which leads to unacceptably small time steps. To overcome this difficulty, we employ the following implicit scheme for the collision part: 
\begin{equation} \label{3.8} 
	f_{k,\J}^{n+1}=\tilde{f}_{k,\J}^{n}+\frac{\Delta t}{\sigma}\sum_{m = 1}^K Q_{km}{f}_{m,\J}^{n+1}
\end{equation}
or equivalently in its matrix form
\begin{equation} \label{3.9}
	(I_K-\frac{\Delta t}{\sigma} Q)f_{\J}^{n+1}=\tilde{f}_{\J}^{n},
\end{equation}
where $I_K$ is the $K\times K$-identity matrix. 

For the semi-implicit schemes (\ref{3.1}) and (\ref{3.8}), we have the following result, where the time step $\Delta t$ is independent of the source term.
\begin{theorem}
	Assume that $\Delta t$ satisfies (\ref{3.5}),
	then the numerical solution to the semi-implicit schemes (\ref{3.1}) and (\ref{3.8}) is exponentially stable in the sense of $\ell^2$-norm, provided that the numerical boundary conditions ensure $\mathcal{B} \leq 0$.
\end{theorem}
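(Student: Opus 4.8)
The plan is to run the two-step argument of Theorem~\ref{T1}: the advection half-step (\ref{3.1}) is literally unchanged, so Lemma~\ref{L2} applies verbatim and, under (\ref{3.5}) and $\mathcal B\le0$, gives $L(\tilde f^{n})\le(1-c\,\Delta t)L(f^n)$ with $c:=\frac{m\mu}{2\lambda_M\alpha}$ for every $\alpha\ge M/\lambda_M$. Everything then reduces to the implicit collision half-step (\ref{3.9}). Since $Q$ has eigenvalues $0$ (multiplicity $K-r$) and $-\lambda_1,\dots,-\lambda_r<0$, the matrix $I_K-\frac{\Delta t}{\sigma}Q$ is invertible, so $f^{n+1}_{\J}$ is well defined and $\tilde f^{n}_{\J}=f^{n+1}_{\J}-\frac{\Delta t}{\sigma}Qf^{n+1}_{\J}$. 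Substituting this into $L(f^{n+1})-L(\tilde f^{n})$ and writing $A_{\J}:=\alpha\Lambda_0+\Lambda_{\J}\succ0$, one gets the pointwise identity
$$
\begin{aligned}
(f^{n+1}_{\J})^{T}A_{\J}f^{n+1}_{\J}-(\tilde f^{n}_{\J})^{T}A_{\J}\tilde f^{n}_{\J}=&\ \frac{2\Delta t}{\sigma}(f^{n+1}_{\J})^{T}Q^{T}A_{\J}f^{n+1}_{\J}\\
&-\frac{(\Delta t)^{2}}{\sigma^{2}}(Qf^{n+1}_{\J})^{T}A_{\J}(Qf^{n+1}_{\J}).
\end{aligned}
$$
Because $A_{\J}$ is positive definite the last term is nonpositive; summing over $\J\in\mathcal J$ therefore gives $\frac{L(f^{n+1})-L(\tilde f^{n})}{\Delta t}\le\frac1\sigma\mathcal L$, where $\mathcal L:=2(\Delta x)^{d}\sum_{\J\in\mathcal J}(f^{n+1}_{\J})^{T}Q^{T}A_{\J}f^{n+1}_{\J}$.

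Now $\mathcal L$ is exactly the quantity bounded in the proof of Lemma~\ref{L3} (whose estimate uses only Lemma~\ref{L1}, the block partition of $P^{-T}\Lambda_{\J}P^{-1}$, and Cauchy--Schwarz), so I would repeat that computation with $f^{n+1}$ in place of $\tilde f^{n}$: for any $\epsilon>0$,
$$
\mathcal L\le\epsilon\|u\|_{\ell^{2}}^{2}+\Big(\frac{C_1^{2}}{4\epsilon}+C_2-\alpha\lambda\Big)\|q\|_{\ell^{2}}^{2},
$$
where $u_{\J},q_{\J}$ are the two blocks of $Pf^{n+1}_{\J}$ in the partition of Lemma~\ref{L1} and $C_1,C_2,\lambda$ are as in Lemma~\ref{L3}. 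Choosing $\alpha$ large enough that the $\|q\|_{\ell^{2}}^{2}$-coefficient is $\le0$, and using $\|u\|_{\ell^{2}}^{2}\le\|P\|^{2}\|f^{n+1}\|_{\ell^{2}}^{2}\le\frac{\|P\|^{2}}{\alpha\lambda_m}L(f^{n+1})$ by (\ref{3.4}), I obtain
$$
(1-\theta)L(f^{n+1})\le L(\tilde f^{n}),\qquad\theta:=\frac{\Delta t\,\epsilon\,\|P\|^{2}}{\sigma\,\alpha\,\lambda_m}.
$$

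Combining with the advection estimate gives $(1-\theta)L(f^{n+1})\le(1-c\,\Delta t)L(f^n)$, and the remaining task is to pick the constants in the right order. In contrast to Theorem~\ref{T1}, here $\epsilon$ must be taken proportional to $\sigma$: setting $\epsilon:=\frac{\sigma\,m\mu\lambda_m}{4\lambda_M\|P\|^{2}}$ — independent of $\alpha$ and of $\Delta x,\Delta t$ — makes $\theta=\frac12 c\,\Delta t$; only then do I fix $\alpha:=\max\{\frac{M}{\lambda_M},\frac1\lambda(\frac{C_1^{2}}{4\epsilon}+C_2)\}$, enlarging it further (e.g.\ also requiring $\alpha\ge\frac{m}{2\lambda_M}\max_k\sum_i|\lambda_{ki}|$) so that $c\,\Delta t\le1$ for every $\Delta t$ allowed by (\ref{3.5}); this is possible because (\ref{3.5}) forces $\Delta t\le1/\max_k\sum_i|\lambda_{ki}|$ and one always has $\mu\le\min_k\sum_i\lambda_{ki}^{2}$. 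With these choices $L(f^{n+1})\le\frac{1-c\Delta t}{1-\frac12 c\Delta t}L(f^n)\le(1-\frac12 c\,\Delta t)L(f^n)$, and iterating together with (\ref{3.4}) exactly as in Theorem~\ref{T1} yields $\|f^n\|_{\ell^{2}}\le\sqrt{2\lambda_M/\lambda_m}\,e^{-(c/4)(n\Delta t)}\|f^0\|_{\ell^{2}}$. Note $\alpha$, hence the decay rate $\nu=c/4$, depends on $\sigma$, but $C=\sqrt{2\lambda_M/\lambda_m}$ and $\nu$ are independent of $\Delta x,\Delta t$, and the only restriction on $\Delta t$ is (\ref{3.5}) — which is exactly the point for stiff $Q$.

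The main obstacle is that the implicit collision step is not contractive for $L$: the undamped component $u$ of $f^{n+1}$ enters $\mathcal L$ with a prefactor $1/\sigma$, whereas the only beneficial term, $-\frac{(\Delta t)^{2}}{\sigma^{2}}(Qf^{n+1}_{\J})^{T}A_{\J}(Qf^{n+1}_{\J})$, controls $q$ alone, since $Qf^{n+1}_{\J}$ depends on $q_{\J}$ only (by (\ref{2.4})). Hence the spurious growth $\theta$ cannot be absorbed inside the collision step and must be dominated by the advection dissipation $c\,\Delta t$; enforcing $\theta\le\frac12 c\,\Delta t$ uniformly in $\sigma$ is what compels $\epsilon\sim\sigma$ and therefore $\alpha\sim1/\sigma$. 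Getting this chain of choices right — $\epsilon$ first (independent of $\alpha$), $\alpha$ second, both independent of the mesh — is the only delicate step; no estimate beyond Lemmas~\ref{L1}--\ref{L3} is required.
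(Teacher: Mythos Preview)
Your argument is correct and reaches the same conclusion as the paper, but the route for the collision step differs in one notable respect. The paper inserts an auxiliary \emph{Step~1}: multiplying (\ref{3.9}) by $(f_{\J}^{n+1})^{T}\Lambda_0$ and using (\ref{2.5}) with Cauchy--Schwarz yields the $\Lambda_0$-weighted contractivity $\|f^{n+1}\|_{\ell^2}^2\le C_3\|\tilde f^{\,n}\|_{\ell^2}^2$ (with $C_3=\lambda_M/\lambda_m$), and hence $\|u^{n+1}\|_{\ell^2}^2\le\frac{C_3\|P\|^2}{\alpha\lambda_m}L(f^n)$. Feeding this bound---in terms of $L(f^n)$, not $L(f^{n+1})$---into the same $\mathcal L$-estimate you use gives directly $\frac{L(f^{n+1})-L(f^n)}{\Delta t}\le-\mu_1L(f^n)$ in the form of Theorem~\ref{T1}, with no implicit inequality to unwind and no need to cap $c\Delta t$. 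Your version skips this extra estimate and controls $\|u^{n+1}\|_{\ell^2}^2$ by $L(f^{n+1})$; the price is the implicit inequality $(1-\theta)L(f^{n+1})\le(1-c\Delta t)L(f^n)$, which forces you to enlarge $\alpha$ once more to guarantee $c\Delta t\le1$ uniformly under (\ref{3.5}). Both choices of $\epsilon\propto\sigma$ and $\alpha\sim1/\sigma$ coincide, and both yield the same decay rate $\nu=\frac{m\mu}{8\lambda_M\alpha}$; your approach is a bit shorter, the paper's is a bit cleaner at the endgame.
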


\begin{proof}
	Since (\ref{3.5}) holds and $\mathcal{B} \le 0$, the estimate in Lemma~\ref{L2}  for the scheme (\ref{3.1}) 
	remains valid. Therefore, it suffices to establish a counterpart of Lemma \ref{3.6} for the scheme (\ref{3.8}), which will be done in the following three steps.

	\textbf{Step 1. Estimating $\| u^{n+1}\|_{\ell^2}$.}  We multiply (\ref{3.9}) on the left by $(\Delta x)^d(f_{\J}^{n+1})^{T}\Lambda_{0}$ and then sum over all $\J \in \mathcal{J}$:
	\begin{equation*}
		(\Delta x)^d\sum_{\J \in \mathcal{J}} (f_{\J}^{n+1})^T\Lambda_0(I_K-\frac{\Delta t}{\sigma} Q)f_{\J}^{n+1}=	(\Delta x)^d\sum_{\J \in \mathcal{J}}(f_{\J}^{n+1})^T \Lambda_0\tilde{f}_{\J}^{n}.
	\end{equation*}
	Note that $\Lambda_0$ is positive definite, hence for all $\J \in \mathcal{J}$,
	$$
	(f_{\J}^{n+1})^T\Lambda_0\tilde{f}_{\J}^{n} \leq \frac{1}{2}(f_{\J}^{n+1})^T \Lambda_0f_{\J}^{n+1}+\frac{1}{2}(\tilde{f}_{\J}^{n})^T \Lambda_0\tilde{f}_{\J}^{n}
	$$ 
	by the weighted Cauchy-Schwarz inequality. Besides, thanks to (\ref{2.5}), we have
	$$
	(f_{\J}^{n+1})^T\Lambda_0Qf_{\J}^{n+1}=-(q_{\J}^{n+1})^T\Lambda q_{\J}^{n+1} \leq -\lambda(q_{\J}^{n+1})^Tq_{\J}^{n+1},
	$$
	where $$
	\begin{pmatrix}
		u_{\J}^{n+1} \\
		q_{\J}^{n+1}
	\end{pmatrix}=P f_{\J}^{n+1}
	$$  with $u_{\J}^{n+1} \in \mathbb R^{K-r},q_{\J}^{n+1} \in \mathbb R^{r}$ and $\lambda>0$ is  the smallest eigenvalue of the diagonal positive definite matrix $\Lambda$ as defined in the proof of Lemma \ref{L3}.
	Consequently, we have
	$$
	(\Delta x)^d	\sum_{\J \in \mathcal{J}} (f_{\J}^{n+1})^T\Lambda_0f_{\J}^{n+1}+(\Delta x)^d	\sum_{\J \in \mathcal{J}}\frac{\lambda\Delta t}{\sigma} (q_{\J}^{n+1})^Tq_{\J}^{n+1} \leq (\Delta x)^d\sum_{\J \in \mathcal{J}} (\tilde f_{\J}^{n})^T\Lambda_0\tilde f_{\J}^{n},
	$$
	in particular
	$$
	(\Delta x)^d\sum_{\J \in \mathcal{J}} (f_{\J}^{n+1})^T\Lambda_0f_{\J}^{n+1} \leq (\Delta x)^d\sum_{\J \in \mathcal{J}} (\tilde f_{\J}^{n})^T\Lambda_0\tilde f_{\J}^{n}.
	$$
	Let $C_3=\lambda_M/\lambda_m$ denote the ratio of the largest to the smallest eigenvalue of $\Lambda_0$. Then we have
	$$
	\| f^{n+1}\|^2_{\ell^2} \leq C_3 \| \tilde f^{n}\|^2_{\ell^2},
	$$
	and
	$$
	\begin{aligned}
		\| u^{n+1}\|^2_{\ell^2} \leq \| Pf^{n+1}\|_{\ell^2}^2 \leq C_3 \| P\|^2 \|\tilde  f^{n}\|^2_{\ell^2} \leq \frac{C_3 \| P\|^2}{\alpha \lambda_m} L(\tilde f^n) \leq \frac{C_3 \| P\|^2}{\alpha \lambda_m} L(f^n),
	\end{aligned}
	$$
	where  Lemma \ref{L2} has been used for the last inequality sign.

	\textbf{Step 2.  Estimating  $L(f^{n+1})$.} We multiply (\ref{3.9}) on the left by $(\Delta x)^d(f_{\J}^{n+1})^T(\alpha \Lambda_0+\Lambda_{\J})$, and sum over all $\J \in \mathcal{J}$:
	\begin{equation} \label{3.10}
		(\Delta x)^d	\sum_{\J\in \mathcal{J}} (f_{\J}^{n+1})^T(\alpha \Lambda_0+\Lambda_{\J})	(I_K-\frac{\Delta t}{\sigma} Q)f_{\J}^{n+1}=	(\Delta x)^d	\sum_{\J\in \mathcal{J}}(f_{\J}^{n+1})^T(\alpha \Lambda_0+\Lambda_{\J})\tilde{f}_{\J}^{n}.
	\end{equation}
	Note that $\alpha \Lambda_0+\Lambda_{\J}$ is positive definite, hence 
	$$
	(f_{\J}^{n+1})^T(\alpha \Lambda_0+\Lambda_{\J})\tilde{f}_{\J}^{n} \leq \frac{1}{2}(f_{\J}^{n+1})^T(\alpha \Lambda_0+\Lambda_{\J})f_{\J}^{n+1}+\frac{1}{2}(\tilde{f}_{\J}^{n})^T(\alpha \Lambda_0+\Lambda_{\J})\tilde{f}_{\J}^{n}.
	$$ 
	Consequently, (\ref{3.10}) becomes
	$$
	L(f^{n+1}) \leq L(\tilde f^n)+\frac{2\Delta t}{\sigma}	\sum_{\J\in \mathcal{J}} (f_{\J}^{n+1})^T(\alpha \Lambda_0+\Lambda_{\J})Qf_{\J}^{n+1}(\Delta x)^d.
	$$
	By following the same argument as in the proof of the lower-order term $\mathcal{L}$ in Lemma~\ref{L3}, we obtain 
	$$
	2	\sum_{\J\in \mathcal{J}} (f_{\J}^{n+1})^T(\alpha \Lambda_0+\Lambda_{\J})Qf_{\J}^{n+1}(\Delta x)^d \leq \epsilon \| u^{n+1}\|^2_{\ell^2}+(\frac{C^2_1}{4\epsilon}+C_2-\alpha \lambda)\| q^{n+1}\|^2_{\ell^2}
	$$
	for any $\epsilon>0.$
	
	Consequently, we have
	$$
	\frac{L(f^{n+1})-L(\tilde f^n)}{\Delta t} \leq  \frac{\epsilon}{\sigma} \| u^{n+1}\|^2_{\ell^2}+\frac{1}{\sigma}(\frac{C^2_1}{4\epsilon}+C_2-\alpha \lambda)\| q^{n+1}\|^2_{\ell^2}.
	$$
	
	\textbf{Step 3. Choosing $\alpha$ and completing the proof.} Now we have
	$$
	\begin{aligned}
		\frac{L(f^{n+1})-L(f^n)}{\Delta t}  &=\frac{L( f^{n+1})-L(\tilde f^n)}{\Delta t}+\frac{L( \tilde f^{n})-L( f^n)}{\Delta t} \\ &\leq \frac{\epsilon}{\sigma} \| u^{n+1}\|^2_{\ell^2}+\frac{1}{\sigma}(\frac{C^2_1}{4\epsilon}+C_2-\alpha \lambda)\| q^{n+1}\|^2_{\ell^2}-\frac{m\mu}{2\lambda_M\alpha} L( f^n) \\
		&\leq   (-\frac{m\mu}{2\lambda_M\alpha} +\frac{C_3\|P\|^2\epsilon}{\alpha \lambda_m \sigma})L(f^n)+\frac{1}{\sigma}(\frac{C^2_1}{4\epsilon}+C_2-\alpha \lambda)\|q^{n+1}\|_{\ell^2}^2,
	\end{aligned}
	$$
	where the estimate of $\| u^{n+1}\|_{\ell^2}^2$ is used in the last inequality sign.
	
	As in the proof of Theorem \ref{T1}, first we choose
	$$
	\epsilon=\frac{m\mu\lambda_m }{4\| P\|^2\lambda_M C_3}\sigma,
	$$ 
	then we choose
	\begin{equation} \label{3.11}
		\alpha =\max (\frac{C_1^2\lambda_MC_3\|P\|^2 }{m\mu \lambda \lambda_m}\frac{1}{\sigma}+\frac{C_2}{\lambda},\frac{M}{\lambda_M}),
	\end{equation}
	which leads to
	$$
	\frac{L(f^{n+1})-L(f^n)}{\Delta t} \leq -\frac{m\mu}{4\lambda_M\alpha}L(f^n).
	$$
	By following the same argument as in the proof of Theorem~\ref{T1}, the semi-implicit schemes can be showed to be  exponentially stable, where the constants in Definition~\ref{D1} are $C=\sqrt{2\lambda_M/\lambda_m}$ and $\nu=\frac{m\mu}{8\lambda_M \alpha}.$ This completes the proof.
\end{proof}

\begin{remark} \label{R3}
	Note that as the positive number $\sigma \to 0$, $\alpha$ tends to infinity (see (\ref{3.11})) then $\nu$ approaches zero. Hence, although the time step $\Delta t$ are independent of the source term $Q/\sigma$ when 
	implicit scheme (\ref{3.8}) is applied, the cost is a slower convergence rate for smaller values of $\sigma$. In contrast, for the explicit scheme (\ref{3.2}), $\alpha$ will not depend on $\sigma$ (see (\ref{3.6})); so the convergence rate remains fixed, but the time step is strictly limited. This highlights the trade-off between time steps and convergence rate, which will be illustrated in Simulation~III in Section~\ref{S4}.
\end{remark}

\section{Numerical simulations for the coplanar model} \label{S4}
In the previous section, we have shown that, the numerical boundary stabilization can be achieved for discrete-velocity models defined on  $\Omega=(0,1)^d$, at least under the trivial numerical boundary conditions (\ref{3.3}). Here, we carry out numerical simulations for the 2-D coplanar model \cite{cabannes1976etude,Gatignol1975,platkowski1988discrete}, which  belongs to the discrete-velocity models and can be applied to the study of the stationary plane flow around a wedge. We will also provide further examples of feasible numerical boundary conditions and compare their performance in numerical simulations.

Consider the gas confined in a square container $\Omega=(0,1)^2$ and postulate that the gas particles move with one of the four velocities of equal positive modulus $U$:
$$
u_1=(U,0),\ \ u_2=(-U,0),\ \ u_3=(0,U),\  \ u_4=(0,-U).
$$
Denote the number density functions by $\tilde{f}=(\tilde{f}_1,\tilde{f}_2,\tilde{f}_3,\tilde{f}_4)^T.$ Here $\tilde{f}_i=\tilde{f}_i(t,x_1,x_2)>0$  for $(t,x_1,x_2) \in [0,\infty) \times \bar{\Omega}$ is corresponding to the velocity $u_i$ ($i=1,\cdots,4$). The governing equation for each $\tilde{f}_k$ is
\begin{equation} \label{4.1}
	\begin{aligned}	\partial_t\tilde{f}_1+U\partial_{x_1}\tilde{f}_1&=\frac{1}{\sigma}\left(\tilde{f}_3\tilde{f}_4-\tilde{f}_1\tilde{f}_2\right),\\
		\partial_t\tilde{f}_2-U\partial_{x_1}\tilde{f}_2&=\frac{1}{\sigma}\left(\tilde{f}_3\tilde{f}_4-\tilde{f}_1\tilde{f}_2\right),\\
		\partial_t\tilde{f}_3+U\partial_{x_2}\tilde{f}_3&=-\frac{1}{\sigma}\left(\tilde{f}_3\tilde{f}_4-\tilde{f}_1\tilde{f}_2\right),\\
		\partial_t\tilde{f}_4-U\partial_{x_2}\tilde{f}_4&=-\frac{1}{\sigma}\left(\tilde{f}_3\tilde{f}_4-\tilde{f}_1\tilde{f}_2\right),
	\end{aligned}
\end{equation}
where $\sigma>0$  is proportional to the mean free path of the particles.

For a uniform steady state $f_e=(f_{1}^e,f_{2}^e,f_{3}^e,f_{4}^e)^T$ with positive components, we denote the fluctuation $f:=\tilde{f}-f_e=(f_1,f_2,f_3,f_4)^T.$ Then we linearize (\ref{4.1}) at this uniform steady state :
\begin{equation} \label{4.2}
	\begin{aligned}
		\partial_t f+\Lambda_1 \partial_{x_1} f+\Lambda_{2} \partial_{x_2}  f= \frac{1}{\sigma}Qf,
	\end{aligned}
\end{equation}
with $\Lambda_1=\diag(U,-U,0,0), \Lambda_2=\diag(0,0,U,-U)$,
$$
Q=\begin{pmatrix}
	-f_{2}^e   & -f_{1}^e  & f_{4}^e  & f_{3}^e\\  -f_{2}^e   & -f_{1}^e  & f_{4}^e  & f_{3}^e \\ f_{2}^e   & f_{1}^e  & -f_{4}^e  & -f_{3}^e\\ f_{2}^e   & f_{1}^e  & -f_{4}^e  & -f_{3}^e
\end{pmatrix}
$$
and $\Lambda_0=\diag(\frac{1}{f_1^e},\frac{1}{f_2^e},\frac{1}{f_3^e},\frac{1}{f_4^e})$ in Lemma \ref{L1} thanks to \cite[Lemma 1]{yang2025}.

According to the expressions of $\Lambda_1$ and $\Lambda_2$,  the incoming variables of the left, right, bottom and top edges of the boundary are $f_1(t,0,y),f_2(t,1,y),f_3(t,x,0),f_4(t,x,1)$, respectively, while the outgoing variables  from the left, right, bottom and top edges of the boundary are $f_2(t,0,y),f_1(t,1,y),f_4(t,x,0),f_3(t,x,1)$, respectively.

\subsection{Numerical boundary conditions}
In this subsection, we will give three different feasible numerical boundary conditions, local or non-local.

The boundary term $\mathcal{B}$ in Lemma \ref{L2} can be expressed as
$$
\begin{aligned}
	\mathcal{B}= &U\Delta x\left( \sum_{j_2=1}^{N-1}  (f_{1,(0,j_2)}^n)^2\left(\frac{\alpha}{f_1^e}+\exp(-U\Delta x)\right)+\sum_{j_1=1}^{N-1}  (f_{3,(j_1,0)}^n)^2\left(\frac{\alpha}{f_3^e}+\exp(-U\Delta x)\right) \right)\\
	&-U\Delta x\sum_{j_2=1}^{N-1}  (f_{1,(N-1,j_2)}^n)^2\left(\frac{\alpha}{f_1^e}+\exp(-U(N-1)\Delta x)\right) \\ &-U\Delta x\sum_{j_1=1}^{N-1}  (f_{3,(j_1,N-1)}^n)^2\left(\frac{\alpha}{f_3^e}+\exp(-U(N-1)\Delta x)\right)  \\
	&-U\Delta x\left( \sum_{j_2=1}^{N-1}  (f_{2,(1,j_2)}^n)^2\left(\frac{\alpha}{f_2^e}+\exp(U\Delta x)\right)+\sum_{j_1=1}^{N-1}  (f_{4,(j_1,1)}^n)^2\left(\frac{\alpha}{f_4^e}+\exp(U\Delta x)\right) \right)\\
	&+U\Delta x\left( \sum_{j_2=1}^{N-1}  (f_{2,(N,j_2)}^n)^2\left(\frac{\alpha}{f_2^e}+\exp(UN\Delta x)\right)+\sum_{j_1=1}^{N-1}  (f_{4,(j_1,N)}^n)^2\left(\frac{\alpha}{f_4^e}+\exp(UN\Delta x)\right) \right).
\end{aligned}
$$

First, we consider the trivial numerical boundary condition (\ref{3.3}): 
\begin{equation} \label{4.3}
	\begin{aligned}
		f_{1,(0,j_2)}^n&=0,\quad \forall j_2 \in \{1,\cdots,N-1\},\\
		f_{2,(N,j_2)}^n&=0,\quad  \forall j_2 \in \{1,\cdots,N-1\},\\
		f_{3,(j_1,0)}^n&=0,\quad  \forall j_1 \in \{1,\cdots,N-1\},\\
		f_{4,(j_1,N)}^n&=0,\quad  \forall j_1 \in \{1,\cdots,N-1\},
	\end{aligned}
\end{equation}
and clearly $\mathcal{B} \leq 0$ with (\ref{4.3}).

Now we consider more complicated numercial boundary conditions. We first choose the zero numerical boundary conditions on the left, right and top edges:
\begin{equation} \label{4.4}
	\begin{aligned}
		f_{1,(0,j_2)}^n&=0,\quad \forall j_2 \in \{1,\cdots,N-1\},\\
		f_{2,(N,j_2)}^n&=0,\quad  \forall j_2 \in \{1,\cdots,N-1\},\\
		f_{4,(j_1,N)}^n&=0,\quad  \forall j_1 \in \{1,\cdots,N-1\}.
	\end{aligned}
\end{equation}
For the bottom edge, we consider the following two  non-local numerical boundary conditions. 

The first one is to assign the value of the numerical incoming variables of the bottom edge in terms of the of the numerical outgoing variables of the left edge: 
\begin{equation} \label{4.5}
	f_{3,(j_1,0)}^n=kf_{2,(1,j_1)}^n,\quad  \forall j_1 \in \{1,\cdots,N-1\},
\end{equation}
where the tuning parameter $k$ serves as the control variable to be chosen.
With the numerical boundary conditions (\ref{4.4}) and (\ref{4.5}), we have
$$
\begin{aligned}
	\mathcal{B} &\leq U\Delta x \sum_{j_1=1}^{N-1} \left( (f_{3,(j_1,0)}^n)^2\left(\frac{\alpha}{f_3^e}+\exp(-U\Delta x)\right) 
	- (f_{2,(1,j_1)}^n)^2\left(\frac{\alpha}{f_2^e}+\exp(U\Delta x)\right) \right) \\
	&= U\Delta x \sum_{j_1=1}^{N-1} (f_{2,(1,j_1)}^n)^2\left(k^2\left(\frac{\alpha}{f_3^e}+\exp(-U\Delta x)\right) 
	-\left(\frac{\alpha}{f_2^e}+\exp(U\Delta x)\right) \right).
\end{aligned}
$$
Consequently, $\mathcal{B}\leq 0$ provided that
$$
|k| \leq \sqrt{\frac{\alpha/f_2^e+\exp(U\Delta x)}{\alpha/f_3^e+\exp(-U\Delta x)}},
$$
Note that $\Delta x \in (0,1)$, then
$$
|k| \leq \sqrt{\frac{\alpha/f_2^e+1}{\alpha/f_3^e+1}}
$$ 
is sufficient for $\mathcal{B} \leq0$, which is essential in the stabilization of the numerical schemes. 

Besides (\ref{4.5}), the numerical boundary conditions for the bottom edge can alternatively be specified as
\begin{equation} \label{4.6}
	f_{3,(j_1,0)}^n=k_1f_{2,(1,j_1)}^n+k_2f_{4,(j_1,1)}^n,\quad  \forall j_1 \in \{1,\cdots,N-1\},
\end{equation}
where $k_1$ and $k_2$ are our control variables.
With the numerical boundary conditions (\ref{4.4}) and (\ref{4.6}), we have
$$
\begin{aligned}
	\mathcal{B} \leq& U\Delta x \sum_{j_1=1}^{N-1} \left( (f_{3,(j_1,0)}^n)^2\left(\frac{\alpha}{f_3^e}+\exp(-U\Delta x)\right)- (f_{2,(1,j_1)}^n)^2\left(\frac{\alpha}{f_2^e}+\exp(U\Delta x)\right) \right) \\
	&- U\Delta x\sum_{j_1=1}^{N-1}  (f_{4,(j_1,1)}^n)^2\left(\frac{\alpha}{f_4^e}+\exp(U\Delta x)\right)\\
	\leq& U\Delta x \sum_{j_1=1}^{N-1} (f_{2,(1,j_1)}^n)^2 \left( 2k_1^2\left(\frac{\alpha}{f_3^e}+\exp(-U\Delta x)\right)- \left(\frac{\alpha}{f_2^e}+\exp(U\Delta x)\right) \right) \\
	&+U\Delta x \sum_{j_1=1}^{N-1} (f_{4,(j_1,1)}^n)^2 \left( 2k_2^2\left(\frac{\alpha}{f_3^e}+\exp(-U\Delta x)\right)- \left(\frac{\alpha}{f_4^e}+\exp(U\Delta x)\right) \right)
\end{aligned}
$$
Similar computations conclude that 
$$
|k_1| \leq \sqrt{\frac{\alpha/f_2^e+1}{2(\alpha/f_3^e+1)}}, \quad |k_2| \leq \sqrt{\frac{\alpha/f_4^e+1}{2(\alpha/f_3^e+1)}}
$$
are sufficient for $\mathcal{B}\leq0.$

\subsection{Numerical simulation}
In numerical simulations, we take  $U=1$,  $f_1^e=0.4$, $f_2^e=0.3$, $f_3^e=0.2, f_4^e=0.6$.  Clearly, $f_1^ef_2^e=f_3^ef_4^e$ holds, which implies that they are uniform steady states.

\textbf{Simulation I: Explicit schemes with different spatial grids.} We first assume $\sigma=1$ and apply the numerical schemes (\ref{3.1}) and (\ref{3.2}), and the initial condition is set to be $(1,1,1,1)$. According to \cite[Lemma 1]{yang2025}, the matrices $\Lambda_0,\Lambda$ and $P$ can be explicitly computed. Consequently, all the constants $M,m,\lambda_m,\lambda_M,\lambda,\mu,\| P\|,\| Q\|$ can be explicitly computed. 
Once the spatial step $\Delta x$ is fixed, we can use these constants to determine the corresponding $\Delta t$ as required in Theorem \ref{T1}. For the numerical boundary condition (\ref{4.3}) and different spatial step sizes $\Delta x$, the time evolution of the logarithm of the $\ell^2$-norm is presented in Figure \ref{fig3}. The results show that the numerical solution of the linearized system (\ref{4.2}), with the prescribed initial data, converges exponentially to the origin. Moreover, the decay rates corresponding to different spatial discretizations are nearly identical, indicating that the $\ell^2$-norm of the solutions decays uniformly, independent of both $\Delta x$ and $\Delta t$, as stated in Theorem \ref{T1}.
\begin{figure}[htbp]
	\centering
	\includegraphics[width=0.7\textwidth]{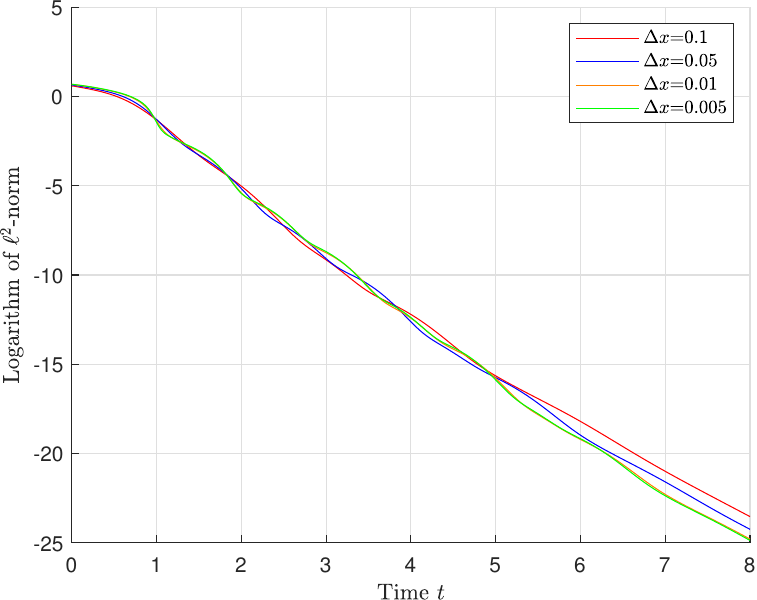}
	\caption{Simulation I, Time evolution of logarithm of $\ell^2$-norm of the solution with four different  spatial grids.}
	\label{fig3}
\end{figure}

\textbf{Simulation II: Explicit schemes with different numerical boundary conditions.}
Besides, given $\Delta x = 0.05$, $\Delta t=0.01$ and $\sigma=1,$ we also use the upwind scheme (\ref{3.1}) and the explicit scheme (\ref{3.2}). The numerical results corresponding to the numerical boundary conditions (\ref{4.3}), (\ref{4.4})--(\ref{4.5}), and (\ref{4.4})--(\ref{4.6}) are presented in Figure \ref{fig4}, where 
$k=1$ in (\ref{4.5}) and $k_1=k_2=1$ in (\ref{4.6}). These results demonstrate that the two non-local numerical boundary conditions (\ref{4.4})--(\ref{4.5}) and (\ref{4.4})--(\ref{4.6}) are feasible, while the trivial numerical boundary condition (\ref{4.3}) yields the strongest damping effect, as stated in Remark \ref{R2}.
\begin{figure}[htbp]
	\centering
	\includegraphics[width=0.7\textwidth]{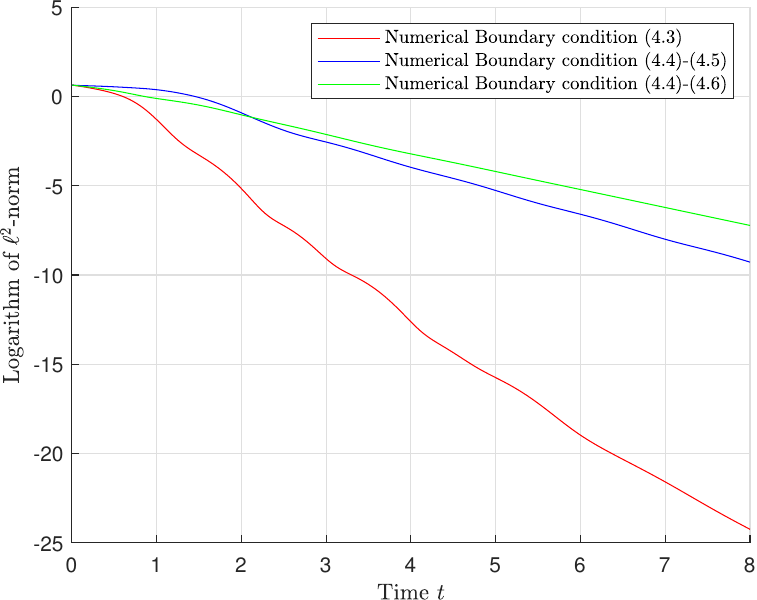}
	\caption{Simulation II, Time evolution of logarithm of $\ell^2$-norm of the solution with three different numercial boundary conditions.}
	\label{fig4}
\end{figure}

\textbf{Simulation III: Semi-implicit schemes with different mean free path $\sigma$.}
Finally, we set $\Delta x =0.1$, $\Delta t = 0.05$ and impose the numerical boundary condition (\ref{4.3}). For different values of mean free path $\sigma$, we apply the upwind scheme (\ref{3.1}) in combination with the implicit scheme (\ref{3.8}).  The results for different values of $\sigma$ are shown in Figure \ref{fig5}. As expected, a smaller $\sigma$ does not require a smaller time step $\Delta t$, but it results in a slower decay rate, as discussed in Remark~\ref{R3}.

For comparison, we also present the result (the black dashed line) obtained with $\sigma = 0.02$ when using the upwind scheme (\ref{3.1}) together with the explicit scheme (\ref{3.2}) under the same discretization parameters and numerical boundary condition. The divergent behavior observed in this case  clearly indicates that the explicit scheme becomes unstable in the presence of the stiff source term.
\begin{figure}[htbp]
	\centering
	\includegraphics[width=0.7\textwidth]{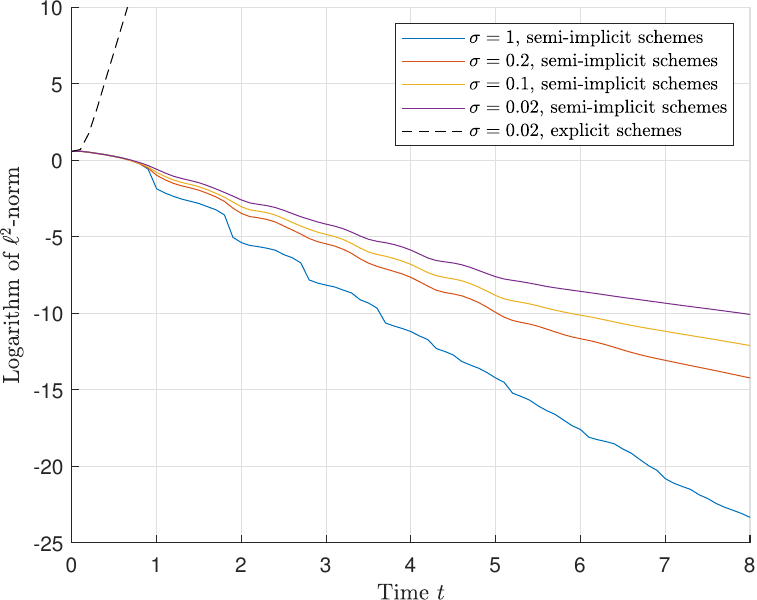}
	\caption{Simulation III, Time evolution of logarithm of $\ell^2$-norm of different $\sigma$ with implicit or explicit schemes.}
	\label{fig5}
\end{figure}

\section{Concluding remarks}
In this paper, we extend our recent results on multi-dimensional discrete-velocity models to the numerical level. By adopting an operator splitting scheme and introducing a suitable discrete Lyapunov function, we derive numerical control laws that ensure the corresponding numerical solutions decay exponentially in time. To handle stiff source terms, we also use an implicit scheme for the collision part and prove the stability of the resulting schemes. As an application, numerical control laws (\ref{4.3}), (\ref{4.4})–(\ref{4.5}), and (\ref{4.4})–(\ref{4.6}) are designed for the 2-D coplanar model. The theoretical results are validated through three types of numerical simulations.

We only consider the domain $\Omega=(0,1)^d$, and it is trivial to extend the results to the  general hyperrectangular domains. However, the extension to general domains is an   intricate but interesting question. In those cases, the finite difference schemes (\ref{3.1})-(\ref{3.2}) or (\ref{3.1})-(\ref{3.8}) should be replaced by  a finite volume discretization.
It is also interesting to extend our numerical stability results to the semi-linear discrete-velocity models. These issues are our ongoing work.

\bibliographystyle{plain}
\bibliography{ref}

\begin{thebibliography}{10}

\bibitem{banda2013numerical}
Mapundi~K Banda and Michael Herty.
\newblock Numerical discretization of stabilization problems with boundary
  controls for systems of hyperbolic conservation laws.
\newblock {\em Math. Control Relat. Fields}, 3(2):121--142, 2013.

\bibitem{banda2020numerical}
Mapundi~K Banda and Gediyon~Y Weldegiyorgis.
\newblock Numerical boundary feedback stabilisation of non-uniform hyperbolic
  systems of balance laws.
\newblock {\em International Journal of Control}, 93(6):1428--1441, 2020.

\bibitem{bastin2016stability}
Georges Bastin and Jean-Michel Coron.
\newblock {\em Stability and boundary stabilization of 1-d hyperbolic systems},
  volume~88.
\newblock Springer, 2016.

\bibitem{Serre2006}
Sylvie Benzoni-Gavage and Denis Serre.
\newblock {\em {Multi-dimensional hyperbolic partial differential equations:
  First-order systems and applications}}.
\newblock Oxford University Press, 11 2006.

\bibitem{cabannes1976etude}
Henri Cabannes.
\newblock Etude de l'{\'e}coulement autour d'un di{\`e}dre pour un gaz {\`a}
  quatre vitesses.
\newblock {\em Annali di Matematica Pura ed Applicata}, 108:19--40, 1976.

\bibitem{Gatignol1975}
R.~Gatignol.
\newblock {\em Theorie Cinetique Des Gaz a Repartition Discrete de Vitesses}.
\newblock Springer, 1975.

\bibitem{gatignol1975kinetic}
Ren{\'e}e Gatignol.
\newblock Kinetic theory for a discrete velocity gas and application to the
  shock structure.
\newblock {\em The Physics of Fluids}, 18(2):153--161, 1975.

\bibitem{Stephan2023}
Stephan Gerster, Felix Nagel, Aleksey Sikstel, and Giuseppe Visconti.
\newblock Numerical boundary control for semilinear hyperbolic systems.
\newblock {\em Mathematical Control and Related Fields}, 13(4):1344--1361,
  2023.

\bibitem{gottlich2017numerical}
Simone G{\"o}ttlich and Peter Schillen.
\newblock Numerical discretization of boundary control problems for systems of
  balance laws: Feedback stabilization.
\newblock {\em European Journal of Control}, 35:11--18, 2017.

\bibitem{hayat2021boundary}
Amaury Hayat.
\newblock Boundary stabilization of 1d hyperbolic systems.
\newblock {\em Annual Reviews in Control}, 52:222--242, 2021.

\bibitem{herty2024numerical}
Michael Herty, Kai Hinzmann, Siegfried Müller, and Ferdinand Thein.
\newblock Numerical boundary control of multi-dimensional hyperbolic equations.
\newblock {\em Mathematical Control and Related Fields}, 2025.

\bibitem{herty2022stabilization}
Michael Herty and Ferdinand Thein.
\newblock Stabilization of a multi-dimensional system of hyperbolic balance
  laws.
\newblock {\em Mathematical Control and Related Fields}, 14(3):1033--1047.

\bibitem{herty2024boundary}
Michael Herty and Ferdinand Thein.
\newblock Boundary feedback control for hyperbolic systems.
\newblock {\em ESAIM: Control, Optimisation and Calculus of Variations}, 30:71,
  2024.

\bibitem{HERTY201612}
Michael Herty and Wen-An Yong.
\newblock Feedback boundary control of linear hyperbolic systems with
  relaxation.
\newblock {\em Automatica}, 69:12--17, 2016.

\bibitem{higdon1986initial}
Robert~L Higdon.
\newblock Initial-boundary value problems for linear hyperbolic system.
\newblock {\em SIAM review}, 28(2):177--217, 1986.

\bibitem{Hu2016}
Long Hu, Florent Di~Meglio, Rafael Vazquez, and Miroslav Krstic.
\newblock Control of homodirectional and general heterodirectional linear
  coupled hyperbolic pdes.
\newblock {\em IEEE Transactions on Automatic Control}, 61(11):3301--3314,
  2016.

\bibitem{Hu2019}
Long Hu, Rafael Vazquez, Florent~Di Meglio, and Miroslav Krstic.
\newblock Boundary exponential stabilization of 1-dimensional inhomogeneous
  quasi-linear hyperbolic systems.
\newblock {\em SIAM Journal on Control and Optimization}, 57(2):963--998, 2019.

\bibitem{inamuro1990numerical}
Takaji Inamuro and Bradford Sturtevant.
\newblock Numerical study of discrete-velocity gases.
\newblock {\em Physics of Fluids A: Fluid Dynamics}, 2(12):2196--2203, 1990.

\bibitem{junk2009weighted}
Michael Junk and Wen-An Yong.
\newblock Weighted l\^{}2-stability of the lattice boltzmann method.
\newblock {\em SIAM Journal on Numerical Analysis}, 47(3):1651--1665, 2009.

\bibitem{li2010controllability}
Daqian Li.
\newblock {\em Controllability and observability for quasilinear hyperbolic
  systems}.
\newblock American Institute of Mathematical Sciences Springfield, MO, 2010.

\bibitem{Majda1975InitialboundaryVP}
Andrew~J. Majda and S.~Osher.
\newblock Initial‐boundary value problems for hyperbolic equations with
  uniformly characteristic boundary.
\newblock {\em Communications on Pure and Applied Mathematics}, 28:607--675,
  1975.

\bibitem{palczewski1997consistency}
Andrzej Palczewski, Jacques Schneider, and Alexandre~V Bobylev.
\newblock A consistency result for a discrete-velocity model of the boltzmann
  equation.
\newblock {\em SIAM journal on numerical analysis}, 34(5):1865--1883, 1997.

\bibitem{platkowski1988discrete}
Tadeusz Platkowski and Reinhard Illner.
\newblock Discrete velocity models of the boltzmann equation: a survey on the
  mathematical aspects of the theory.
\newblock {\em SIAM review}, 30(2):213--255, 1988.

\bibitem{Russell1978}
David~L. Russell.
\newblock Controllability and stabilizability theory for linear partial
  differential equations: Recent progress and open questions.
\newblock {\em SIAM Review}, 20(4):639--739, 1978.

\bibitem{tucsnak2009observation}
Marius Tucsnak and George Weiss.
\newblock {\em Observation and control for operator semigroups}.
\newblock Springer, 2009.

\bibitem{vazquez2024backstepping}
Rafael Vazquez, Jean Auriol, Federico Bribiesca-Argomedo, and Miroslav Krstic.
\newblock Backstepping for partial differential equations: A survey.
\newblock {\em Automatica}, 183:112572, 2026.

\bibitem{yang2023feedback}
Haitian Yang and Wen-An Yong.
\newblock Feedback boundary control of multi-dimensional hyperbolic systems
  with relaxation.
\newblock {\em Automatica}, 167:111791, 2024.

\bibitem{yang2025}
Haitian Yang and Wen-An Yong.
\newblock Boundary control of multidimensional discrete-velocity kinetic
  models.
\newblock {\em IEEE Transactions on Automatic Control}, 70(9):6183--6190, 2025.

\bibitem{yong1999singular}
Wen-An Yong.
\newblock Singular perturbations of first-order hyperbolic systems with stiff
  source terms.
\newblock {\em Journal of differential equations}, 155(1):89--132, 1999.

\bibitem{yong2008interesting}
Wen-An Yong.
\newblock An interesting class of partial differential equations.
\newblock {\em Journal of mathematical physics}, 49(3):033503, 2008.

\end{thebibliography}
\end{document}